\newtheorem{theorem}{Theorem}[section]
\newtheorem{corollary}{Corollary}[section]
\theoremstyle{definition}
\newtheorem{definition}[theorem]{Definition}
\newtheorem{ex}{Example}[section]
\newtheorem{remark}{Remark}[section]
\newtheorem*{prob:P:T}{Problem}
\begin{document}

\title{A General Delta-Nabla Calculus of Variations on Time Scales
with Application to Economics\thanks{This is a preprint of a paper 
whose final and definite form will be published in the 
\emph{Int. J. Dyn. Syst. Differ. Equ.} (IJDSDE), ISSN 1752-3583. 
Paper submitted 17/Jul/2014; 
revised 21/Sept/2014 and 03/Oct/2014;
accepted for publication 05/Oct/2014.
Part of first author's Ph.D., carried 
out at the University of Aveiro under the
\emph{Doctoral Programme in Mathematics and Applications}
of Universities of Aveiro and Minho.}}

\author{Monika Dryl\\
\texttt{monikadryl@ua.pt}
\and Delfim F. M. Torres\\
\texttt{delfim@ua.pt}}

\date{Center for Research and Development in Mathematics and Applications (CIDMA)\\
Department of Mathematics, University of Aveiro, 3810--193 Aveiro, Portugal}

\maketitle


\begin{abstract}
We consider a general problem of the calculus of variations on time scales
with a cost functional that is the composition of a certain scalar function
with delta and nabla integrals of a vector valued field.
Euler--Lagrange delta-nabla differential equations are proved,
which lead to important insights in the process of discretization.
Application of the obtained results to a firm
that wants to program its production and investment policies
to reach a given production rate and to maximize its future
market competitiveness is discussed.

\bigskip

\noindent \textbf{Keywords:} time scales, calculus of variations,
Euler--Lagrange equations, discretizations, application to economics.

\bigskip

\noindent \textbf{2010 Mathematics Subject Classification:} 34N05; 49K05; 91B02; 91B62.
\end{abstract}


\section{Introduction}

The calculus of variations on time scales has been developing rapidly
in the past nine years, after the pioneering work \cite{MR2106410},
and is now a fertile area of research.
Indeed, in order to deal with nontraditional applications in economics,
where the system dynamics are described on a time scale partly continuous and partly discrete,
or to accommodate nonuniform sampled systems, one needs to work with variational problems
defined on a time scale \cite{MR2218315,Atici:comparison,MyID:267}.
Here we study general nonclassical problems of the calculus of variations
on time scales. More precisely, we consider the problem of minimizing
or maximizing a composition of delta and nabla integral functionals.
Main results include new necessary optimality conditions (Theorem~\ref{main})
that lead to better discretizations with relevance in economics.

The paper is organized as follows. In Section~\ref{sec:Preliminairies}
we collect the necessary background on the nabla and delta
calculus on time scales. In Section~\ref{sec:mainResults}
we formulate the general (nonclassical) mixed delta-nabla
problem \eqref{problem}--\eqref{punkty} of the calculus of variations on time scales.
We prove general necessary optimality conditions of Euler--Lagrange type in differential form
(Theorem~\ref{main}), which are then applied to the particular time scales $\mathbb{T}=\mathbb{R}$
(Corollary~\ref{cor in R}) and $\mathbb{T}=\mathbb{Z}$ (Corollary~\ref{cor in Z}).
In Section~\ref{sec:ApplEconomics} we consider an economic problem
describing a firm that wants to program its production and investment policies
to reach a given production rate and to maximize  its future market competitiveness.
The continuous case, denoted by $(P)$, was discussed in \cite{CastilloPedregal};
here we focus our attention on different discretizations of problem $(P)$,
in particular to the mixed delta-nabla discretizations that we call $(P_{\Delta\nabla})$ and $(P_{\nabla\Delta})$.
For these discrete problems the direct discretization of the Euler--Lagrange equation for $(P)$
does not lead to the solution of the problems: the results found by applying our Corollary~\ref{cor in Z}
to $(P_{\Delta\nabla})$ and $(P_{\nabla\Delta})$ are shown to be better. The comparison
is done in Section~\ref{sec:compare}. We end with Section~\ref{sec:conc} of conclusion
and future work.


\section{Preliminaries}
\label{sec:Preliminairies}

In this section we review some basic definitions and theorems
that are useful in the sequel. For more details concerning
the theory of time scales we refer to the books
\cite{BohnerDEOTS,MBbook2003}. For the calculus of variations
on time scales see \cite{GMT,TorresDeltaNabla,Martins:Torres}
and references therein. All the intervals in this paper
are time scale intervals.

\begin{definition}
A time scale $\mathbb{T}$ is an arbitrary nonempty closed subset of $\mathbb{R}$.
Given a time scale $\mathbb{T}$, the backward jump operator
$\rho:\mathbb{T} \rightarrow \mathbb{T}$ is defined
by $\rho(t):=\sup\lbrace s\in\mathbb{T}: s<t\rbrace$ for $t\neq \inf\mathbb{T}$
and $\rho(\inf\mathbb{T}) := \inf\mathbb{T}$ if $\inf\mathbb{T}>-\infty$.
The forward jump operator $\sigma:\mathbb{T}\rightarrow \mathbb{T}$
is defined by $\sigma(t):=\inf\lbrace s\in\mathbb{T}: s>t\rbrace$
for $t\neq \sup\mathbb{T}$ and $\sigma(\sup\mathbb{T}) := \sup\mathbb{T}$
if $\sup\mathbb{T}<+\infty$.
\end{definition}

A point $t\in\mathbb{T}$ is \emph{right-dense} or \emph{right-scattered},
\emph{left-dense} or \emph{left-scattered},
if $\sigma(t)=t$ or $\sigma(t)>t$, $\rho(t)=t$ or $\rho(t)<t$,
respectively.

\begin{definition}
The backward graininess function
$\nu:\mathbb{T} \rightarrow [0,\infty)$
is defined by $\nu(t):=t-\rho(t)$;
the forward graininess function
$\mu:\mathbb{T} \rightarrow [0,\infty)$
is defined by $\mu(t):=\sigma(t)-t$.
\end{definition}

\begin{ex}
If $\mathbb{T}=h\mathbb{Z}$, $h>0$, then $\sigma(t)=t+h$,
$\rho(t)=t-h$, and $\mu(t)=\nu(t) \equiv h$.
\end{ex}

To simplify the notation, we use $f^{\rho}(t):=f(\rho(t))$
and $f^{\sigma}(t):=f(\sigma(t))$. If $\mathbb{T}$ has
a right-scattered minimum $m$, then
we define $\mathbb{T}_{\kappa}:=\mathbb{T}-\lbrace m\rbrace$;
otherwise, we set $\mathbb{T}_{\kappa}:=\mathbb{T}$.
Similarly, if $\sup\mathbb{T}$ is finite and left-scattered, then
we define $\mathbb{T}^{\kappa}:=\mathbb{T}-\lbrace \sup\mathbb{T}\rbrace$;
otherwise, we set $\mathbb{T}^{\kappa}:=\mathbb{T}$.
Let us define the sets $\mathbb{T}^{\kappa^n}$,  $n\geq 2$, inductively:
$\mathbb{T}^{\kappa^1} :=\mathbb{T}^\kappa$ and
$\mathbb{T}^{\kappa^n} := (\mathbb{T}^{\kappa^{n-1}})^\kappa$,
$n\geq 2$. Similarly, $\mathbb{T}_{\kappa^1} := \mathbb{T}_\kappa$ and
$\mathbb{T}_{\kappa^n} := (\mathbb{T}_{\kappa^{n-1}})_\kappa$, $n\geq 2$.
Finally, we define $\mathbb{T}_{\kappa}^{\kappa} := \mathbb{T}_{\kappa} \cap \mathbb{T}^{\kappa}$.


\subsection{The nabla approach to time scales}
\label{sub:sec:2.1}

The nabla approach is based on the $\rho$ operator.

\begin{definition}[Section 3.1 of \cite{MBbook2003}]
We say that a function $f:\mathbb{T} \rightarrow \mathbb{R}$ is nabla differentiable
at $t\in\mathbb{T}_{\kappa}$ if there is a number $f^{\nabla}(t)$ such that for all
$\varepsilon >0$ there exists a neighborhood $U$ of $t$
(i.e., $U=(t-\delta, t+\delta)\cap\mathbb{T}$ for some $\delta>0$) such that
\begin{equation*}
|f^{\rho}(t)-f(s)-f^{\nabla}(t)(\rho(t)-s)| \leq
\varepsilon |\rho(t)-s| \mbox{ for all }  s\in U.
\end{equation*}
We say that $f^{\nabla}(t)$ is the nabla derivative of $f$ at $t$.
Moreover, $f$ is said to be nabla differentiable on $\mathbb{T}$ provided $f^{\nabla}(t)$
exists for all $t\in\mathbb{T}_{\kappa}$.
\end{definition}

\begin{theorem}[Theorem 8.39 of \cite{BohnerDEOTS}]
\label{rozniczka nabla}
Let $f:\mathbb{T} \rightarrow \mathbb{R}$ and $t\in\mathbb{T}_{\kappa}$.
If $f$ is continuous at $t$ and $t$ is left-scattered,
then $f$ is nabla differentiable at $t$ with
$$
f^{\nabla}(t)=\frac{f(t)-f(\rho(t))}{\nu(t)}.
$$
\end{theorem}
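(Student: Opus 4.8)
The plan is to verify directly from the definition of the nabla derivative that the proposed quotient is an admissible value of $f^{\nabla}(t)$. First I would record that, since $t$ is left-scattered, we have $\rho(t)<t$ and hence $\nu(t)=t-\rho(t)>0$; this guarantees that the candidate $D:=\frac{f(t)-f(\rho(t))}{\nu(t)}$ is a well-defined real number. It then remains to show that for every $\varepsilon>0$ there is a neighborhood $U$ of $t$ on which $|f^{\rho}(t)-f(s)-D(\rho(t)-s)|\le\varepsilon|\rho(t)-s|$ holds for all $s\in U$.

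The key step is an algebraic simplification of the quantity inside the absolute value. Writing $\rho(t)=t-\nu(t)$ and substituting the definition of $D$, I expect $D(\rho(t)-s)$ to rearrange as $f(\rho(t))-f(t)+D(t-s)$, so that the whole expression collapses to $f(t)-f(s)-D(t-s)$. Thus the inequality to be established becomes $|f(t)-f(s)-D(t-s)|\le\varepsilon|\rho(t)-s|$, which involves only the behaviour of $f$ near $t$ itself.

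Next I would choose the radius $\delta$ of $U=(t-\delta,t+\delta)\cap\mathbb{T}$ to satisfy $\delta<\nu(t)$. This forces every $s\in U$ to exceed $\rho(t)$, so that $|\rho(t)-s|=s-\rho(t)\ge\nu(t)-\delta>0$; the right-hand side is therefore bounded below by a fixed positive quantity. Using continuity of $f$ at $t$ to make $|f(t)-f(s)|$ small, and shrinking $\delta$ further so that $|D|\,|t-s|\le|D|\,\delta$ is also small, I would arrange that $|f(t)-f(s)|+|D|\,|t-s|\le\varepsilon(\nu(t)-\delta)$, which by the triangle inequality yields the required estimate.

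The point I expect to matter most is the role of left-scatteredness: it is precisely the condition $\nu(t)>0$ that keeps $|\rho(t)-s|$ bounded away from zero on $U$, turning the defining inequality into a genuine (satisfiable) estimate rather than a degenerate one. Once continuity supplies the smallness of $|f(t)-f(s)|$, no further structural assumptions on $\mathbb{T}$ are needed, and the verification closes.
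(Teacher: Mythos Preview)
Your proposal is correct. The paper does not supply its own proof of this statement---it is quoted as a preliminary from Bohner and Peterson's textbook---so there is no in-paper argument to compare against; your direct verification from the definition is exactly the standard proof one finds in that reference. One small clarification worth making explicit when you write it up: in the final step you want $|f(t)-f(s)|+|D|\,|t-s|\le\varepsilon(\nu(t)-\delta)$, but $\delta$ appears on both sides. To avoid the apparent circularity, first impose an a~priori bound such as $\delta\le\nu(t)/2$, so that $\nu(t)-\delta\ge\nu(t)/2$, and then use continuity (and the bound $|D|\,|t-s|\le|D|\,\delta$) to force the left-hand side below $\varepsilon\,\nu(t)/2$.
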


\begin{theorem}[Theorem 8.41. of \cite{BohnerDEOTS}]
\label{tw:differprop}
Let $f,g:\mathbb{T} \rightarrow \mathbb{R}$
be nabla differentiable at $t\in\mathbb{T_{\kappa}}$. Then,
\begin{enumerate}

\item the sum $f+g:\mathbb{T} \rightarrow \mathbb{R}$
is nabla differentiable at $t$ with
\begin{equation*}
(f+g)^{\nabla}(t)=f^{\nabla}(t)+g^{\nabla}(t);
\end{equation*}

\item for any constant $\alpha$,
$\alpha f:\mathbb{T} \rightarrow \mathbb{R}$
is nabla differentiable at $t$ with
\begin{equation*}
(\alpha f)^{\nabla}(t)=\alpha f^{\nabla}(t);
\end{equation*}

\item the product $fg:\mathbb{T} \rightarrow \mathbb{R}$
is nabla differentiable at $t$ with
\begin{equation*}
(fg)^{\nabla}(t)=f^{\nabla}(t)g(t)+f^{\rho}g^{\nabla}(t)=f(t)g^{\nabla}(t)+f^{\nabla}(t)g^{\rho}(t);
\end{equation*}

\item if $g(t)g^{\rho}(t)\neq 0$, then $f/g$ is nabla differentiable at $t$ with
\begin{equation*}
\bigg(\frac{f}{g}\bigg)^{\nabla}(t)=\frac{f^{\nabla}(t)g(t)-f(t)g^{\nabla}(t)}{g(t)g^{\rho}(t)}.
\end{equation*}
\end{enumerate}
\end{theorem}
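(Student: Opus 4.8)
The plan is to work directly from the $\varepsilon$-neighborhood characterization of the nabla derivative given in the definition above, establishing the four rules in the order listed and bootstrapping the later ones from the earlier ones wherever possible. For the sum rule I would fix $\varepsilon>0$ and use nabla differentiability of $f$ and $g$ to produce neighborhoods $U_1,U_2$ of $t$ on which the defining inequality holds with $\varepsilon/2$ in place of $\varepsilon$ (and with $f^{\nabla}(t)$, $g^{\nabla}(t)$ respectively). On $U=U_1\cap U_2$, adding the two estimates through the triangle inequality shows that $f^{\nabla}(t)+g^{\nabla}(t)$ satisfies the defining inequality for $f+g$ with tolerance $\varepsilon$; since $\varepsilon$ is arbitrary, this identifies $(f+g)^{\nabla}(t)$. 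The constant multiple rule is even quicker: multiplying the defining inequality for $f$ by $|\alpha|$ shows that $\alpha f^{\nabla}(t)$ works for $\alpha f$, the case $\alpha=0$ being trivial.

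The product rule is where the real work lies. I would start from the identity
\[
f^{\rho}(t)g^{\rho}(t) - f(s)g(s) = f^{\rho}(t)\bigl(g^{\rho}(t) - g(s)\bigr) + g(s)\bigl(f^{\rho}(t) - f(s)\bigr),
\]
and substitute the two defining relations $f^{\rho}(t)-f(s)=f^{\nabla}(t)(\rho(t)-s)+E_f(s)$ and $g^{\rho}(t)-g(s)=g^{\nabla}(t)(\rho(t)-s)+E_g(s)$, where the error terms satisfy $|E_f(s)|,|E_g(s)|\leq \varepsilon|\rho(t)-s|$ on a suitable neighborhood. Collecting the coefficient of $(\rho(t)-s)$ yields $f^{\rho}(t)g^{\nabla}(t)+g(s)f^{\nabla}(t)$, which is the claimed expression except that $g(s)$ appears in place of $g(t)$. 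To fix this I would invoke continuity of $g$ at $t$, which itself follows from the defining inequality, writing $g(s)=g(t)+o(1)$, and absorb the discrepancy $\bigl[g(s)-g(t)\bigr]f^{\nabla}(t)(\rho(t)-s)$ together with the terms $f^{\rho}(t)E_g(s)$ and $g(s)E_f(s)$ into a single remainder. The main obstacle is precisely this bookkeeping: each remainder piece must be bounded by a constant multiple of $|\rho(t)-s|$ times a quantity that can be made smaller than $\varepsilon$, which forces me to shrink the neighborhood so that $|g(s)-g(t)|$ is small and $|g(s)|$ stays bounded. Rescaling $\varepsilon$ at the outset then delivers the first form of the rule, and the second form follows by interchanging the roles of $f$ and $g$ (equivalently, by grouping the initial identity the other way).

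Finally, for the quotient rule I would avoid a direct estimate and instead first establish the reciprocal rule $(1/g)^{\nabla}(t)=-g^{\nabla}(t)/\bigl[g(t)g^{\rho}(t)\bigr]$ under the hypothesis $g(t)g^{\rho}(t)\neq 0$. This comes from
\[
\frac{1}{g^{\rho}(t)} - \frac{1}{g(s)} = \frac{-\bigl(g^{\rho}(t)-g(s)\bigr)}{g^{\rho}(t)g(s)},
\]
substituting the defining relation for $g$ and noting that $g^{\rho}(t)g(s)\to g^{\rho}(t)g(t)\neq 0$, so that after shrinking the neighborhood the denominator stays bounded away from zero and the error is controlled as before. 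Then I would write $f/g=f\cdot(1/g)$ and apply the second form of the product rule, $(fg)^{\nabla}(t)=f(t)g^{\nabla}(t)+f^{\nabla}(t)g^{\rho}(t)$, to combine $f^{\nabla}(t)/g^{\rho}(t)$ with $-f(t)g^{\nabla}(t)/\bigl[g(t)g^{\rho}(t)\bigr]$ over the common denominator $g(t)g^{\rho}(t)$, producing exactly $\bigl[f^{\nabla}(t)g(t)-f(t)g^{\nabla}(t)\bigr]/\bigl[g(t)g^{\rho}(t)\bigr]$.
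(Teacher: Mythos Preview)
Your argument is sound and follows the standard $\varepsilon$-neighborhood route: the sum and scalar rules by direct superposition, the product rule via the algebraic splitting of $f^{\rho}(t)g^{\rho}(t)-f(s)g(s)$ together with continuity of $g$ at $t$ to replace $g(s)$ by $g(t)$, and the quotient rule by first doing the reciprocal and then composing with the product rule. All the bookkeeping you describe (shrinking neighborhoods to keep $|g(s)|$ bounded and $|g(s)-g(t)|$ small, rescaling $\varepsilon$) goes through without difficulty.

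There is nothing to compare against, however: the paper does not prove this statement. It appears in the Preliminaries section and is simply quoted as Theorem~8.41 of Bohner and Peterson's monograph, with no argument supplied. Your proof is essentially the one given in that reference (adapted from the delta case, Theorem~1.20 there), so in that sense you have reconstructed the source proof rather than offered an alternative.
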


\begin{definition}[Section 3.1 of \cite{MBbook2003}]
Let $\mathbb{T}$ be a time scale and $f:\mathbb{T}\rightarrow \mathbb{R}$.
We say that $f$ is ld-continuous if it is continuous at left-dense
points and its right-sided limits exists (finite) at all right-dense points.
\end{definition}

The set of all ld-continuous functions $f:\mathbb{T}\rightarrow \mathbb{R}$
is denoted by
\begin{equation*}
C_{ld}=C_{ld}(\mathbb{T})=C_{ld}(\mathbb{T},\mathbb{R})
\end{equation*}
and the set of all nabla differentiable functions
with ld-continuous derivative by
\begin{equation*}
C^{1}_{ld}=C^{1}_{ld}(\mathbb{T})=C^{1}_{ld}(\mathbb{T},\mathbb{R}).
\end{equation*}

\begin{theorem}[Theorems 8.46 and 8.47 of \cite{BohnerDEOTS} and Theorem 8 of \cite{MR2671876}]
If $a,b,c\in\mathbb{T}$, $a \leq c \leq b$, $\alpha\in\mathbb{R}$,
and $f,g\in C_{ld}\left(\mathbb{T}, \mathbb{R}\right)$, then:
\begin{enumerate}

\item $\int\limits_{a}^{b}(f(t)+g(t))\nabla t
=\int\limits_{a}^{b} f(t)\nabla t+\int\limits_{a}^{b}g(t)\nabla t$;

\item $\int\limits_{a}^{b}\alpha f(t)\nabla t
=\alpha \int\limits_{a}^{b} f(t)\nabla t$;

\item $\int\limits_{a}^{b} f(t)\nabla t
=\int\limits_{a}^{c} f(t)\nabla t+\int\limits_{c}^{b} f(t)\nabla t$;

\item $\int\limits_{a}^{a} f(t)\nabla t=0$;

\item if $f,g\in C_{ld}^1\left(\mathbb{T}, \mathbb{R}\right)$, then
$\int\limits_{a}^{b}f(t)g^{\nabla}(t)\nabla t
= \left.f(t)g(t)\right|^{t=b}_{t=a}-\int\limits_{a}^{b}f^{\nabla}(t)g(\rho(t))\nabla t$;

\item if $f(t) \geq 0$ for all $a<t\leq b$, then $\int\limits_{a}^{b}f(t)\nabla t\geq 0$;

\item if $t\in\mathbb{T}_{\kappa}$, then $\int_{\rho(t)}^{t}f(\tau)\nabla \tau=\nu(t)f(t)$.
\end{enumerate}
\end{theorem}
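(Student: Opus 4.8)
The plan is to derive all seven items from a single foundation: the Fundamental Theorem of the nabla calculus, namely that every ld-continuous $f$ possesses a nabla antiderivative $F$ (an ld-continuous function with $F^{\nabla}=f$ on $\mathbb{T}_{\kappa}$), unique up to an additive constant, together with the defining identity $\int_{a}^{b} f(t)\,\nabla t = F(b)-F(a)$. I would take this existence-and-representation result as given, since it is the standard Cauchy construction of the nabla integral for ld-continuous integrands. Once it is in place, most of the items reduce to elementary manipulations of $F$ together with the differentiation rules already recorded in Theorem~\ref{tw:differprop}.

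First I would dispose of items 1--4. For item 1, if $F^{\nabla}=f$ and $G^{\nabla}=g$, then by part 1 of Theorem~\ref{tw:differprop} the function $F+G$ is a nabla antiderivative of $f+g$, so $\int_a^b (f+g)\,\nabla t = (F+G)(b)-(F+G)(a)$, which splits into the two integrals. Item 2 is identical, using part 2 of Theorem~\ref{tw:differprop}: $\alpha F$ is an antiderivative of $\alpha f$. Items 3 and 4 are pure telescoping of $F$: one writes $F(b)-F(a) = (F(c)-F(a))+(F(b)-F(c))$ and $F(a)-F(a)=0$, respectively.

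Next I would treat the two formulas that invoke the local structure of $\mathbb{T}$. For the integration-by-parts formula (item 5), I would start from the nabla product rule $(fg)^{\nabla}(t)=f(t)g^{\nabla}(t)+f^{\nabla}(t)g^{\rho}(t)$ of part 3 of Theorem~\ref{tw:differprop}, solve for $f(t)g^{\nabla}(t)$, and integrate over $[a,b]$; the term $(fg)^{\nabla}$ integrates to $\left.fg\right|_a^b$ by the Fundamental Theorem, and $g^{\rho}=g\circ\rho$ delivers exactly the stated right-hand side. For item 7 I would split on the nature of the point $t$: if $t$ is left-dense then $\rho(t)=t$ and $\nu(t)=0$, so the integral is over a degenerate interval and both sides vanish by item 4; if $t$ is left-scattered, then applying Theorem~\ref{rozniczka nabla} to the antiderivative $F$ gives $F(t)-F(\rho(t))=\nu(t)F^{\nabla}(t)=\nu(t)f(t)$, and the left-hand side is precisely $F(t)-F(\rho(t))$.

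The hard part will be item 6, the positivity statement, because it is the only one that is not a formal consequence of the antiderivative representation and the product/sum rules. Concretely, it amounts to a monotonicity theorem: $F^{\nabla}=f\geq 0$ on $(a,b]$ should force $F(b)\geq F(a)$. I would establish this either through a mean-value (increasing-function) theorem for the nabla derivative, arguing that a function with nonnegative nabla derivative cannot strictly decrease across any partition interval, handling left-scattered steps via Theorem~\ref{rozniczka nabla} and left-dense behavior by a limiting/continuity argument; or, alternatively, by representing $\int_a^b f\,\nabla t$ as a limit of nabla Riemann sums with nonnegative integrand and nonnegative increments, so that every sum is nonnegative and the inequality passes to the limit. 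This monotonicity lemma is the genuine analytic content; the remaining six items are then bookkeeping.
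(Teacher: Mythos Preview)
Your proof sketch is correct and is essentially the standard argument found in the cited textbook of Bohner and Peterson. Note, however, that the paper itself does not prove this statement: it is quoted without proof in the Preliminaries as ``Theorems 8.46 and 8.47 of \cite{BohnerDEOTS} and Theorem 8 of \cite{MR2671876}'', so there is no in-paper proof to compare against. Your derivation via the nabla antiderivative representation, the product rule of Theorem~\ref{tw:differprop} for item~5, the left-dense/left-scattered split together with Theorem~\ref{rozniczka nabla} for item~7, and a monotonicity (or Riemann-sum) argument for item~6, is precisely the route taken in those references.
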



\subsection{The delta approach to time scales}
\label{sub:sec:2.2}

The delta calculus is similar to the nabla one (Section~\ref{sub:sec:2.1})
with $\sigma$ taking the role of operator $\rho$.

\begin{definition}[Section 1.1 of \cite{BohnerDEOTS}]
Let $f:\mathbb{T}\rightarrow\mathbb{R}$ and $t\in\mathbb{T}^{\kappa}$.
We define $f^{\Delta}(t)$ to be the number (provided it exists)
with the property that given any $\varepsilon >0$,
there is a neighborhood $U$ of $t$ such that
\begin{equation*}
\left|f^{\sigma}(t)-f(s)-f^{\Delta}(t)\left(\sigma(t)-s\right)\right|
\leq \varepsilon \left|\sigma(t)-s\right| \mbox{ for all }  s\in U.
\end{equation*}
We call $f^{\Delta}(t)$ the delta derivative of $f$ at $t$.
Function $f$ is delta differentiable on $\mathbb{T}^{\kappa}$ provided
$f^{\Delta}(t)$ exists for all $t\in\mathbb{T}^{\kappa}$. Then,
$f^{\Delta}:\mathbb{T}^{\kappa}\rightarrow\mathbb{R}$
is called the delta derivative of $f$ on $\mathbb{T}^{\kappa}$.
\end{definition}

\begin{theorem}[Theorem 1.16 of \cite{BohnerDEOTS}]
\label{rozniczka delta}
Let $f:\mathbb{T} \rightarrow \mathbb{R}$
and $t\in\mathbb{T}^{\kappa}$. If $f$ is continuous at $t$
and $t$ is right-scattered, then $f$ is delta differentiable at $t$ with
$$
f^{\Delta}(t)=\frac{f(\sigma(t))-f(t)}{\mu(t)}.
$$
\end{theorem}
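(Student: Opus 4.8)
The plan is to verify directly that the candidate number $\alpha := \frac{f(\sigma(t)) - f(t)}{\mu(t)}$ satisfies the defining inequality of the delta derivative at $t$. This candidate is well defined precisely because $t$ is right-scattered, so $\mu(t) = \sigma(t) - t > 0$ and there is no division by zero. Note that the statement is the exact delta-counterpart of Theorem~\ref{rozniczka nabla}, with $\sigma$, $\mu$, and the right side of $t$ playing the roles previously held by $\rho$, $\nu$, and the left side; accordingly, the argument will mirror the nabla one.

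First I would exploit right-scatteredness to pin down a convenient neighborhood. Since there are no points of $\mathbb{T}$ strictly between $t$ and $\sigma(t)$, choosing any $\delta$ with $0 < \delta \le \mu(t)$ guarantees that $U := (t - \delta, t + \delta) \cap \mathbb{T}$ contains no time-scale point in $(t,\sigma(t)]$. Hence every $s \in U$ satisfies $s \le t$, which yields the crucial lower bound $|\sigma(t) - s| = \sigma(t) - s \ge \sigma(t) - t = \mu(t) > 0$.

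Next comes the algebraic simplification. Writing $\sigma(t) - s = \mu(t) + (t - s)$ and substituting the candidate $\alpha$, the expression inside the definition collapses, since the $\mu(t)$ term cancels $f(\sigma(t)) - f(t)$ exactly:
\begin{equation*}
f^{\sigma}(t) - f(s) - \alpha\bigl(\sigma(t) - s\bigr) = \bigl(f(t) - f(s)\bigr) - \alpha\,(t - s).
\end{equation*}
It then remains to bound the right-hand side. Given $\varepsilon > 0$, continuity of $f$ at $t$ lets me pick $\delta$ so small (and at most $\mu(t)$) that $|f(t) - f(s)| \le \frac{\varepsilon \mu(t)}{2}$ for $s \in U$; shrinking $\delta$ further so that $|\alpha|\,\delta \le \frac{\varepsilon \mu(t)}{2}$ controls the second term, because $|t - s| < \delta$. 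Adding the two estimates gives
\begin{equation*}
\bigl| f^{\sigma}(t) - f(s) - \alpha\bigl(\sigma(t) - s\bigr) \bigr| \le \varepsilon\,\mu(t) \le \varepsilon\,|\sigma(t) - s|,
\end{equation*}
which is exactly the defining inequality, so $f^{\Delta}(t) = \alpha$, as claimed.

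The only delicate point, and the one I would flag as the main obstacle, is the final conversion of an \emph{absolute} bound $\varepsilon\,\mu(t)$ into the \emph{relative} bound $\varepsilon\,|\sigma(t)-s|$ demanded by the definition; this is precisely where the estimate $|\sigma(t)-s| \ge \mu(t)$, coming from the right-scattered structure, is indispensable. Everything else is routine: the cancellation in the first display is forced once $\alpha$ is inserted, and the two-term continuity estimate is standard.
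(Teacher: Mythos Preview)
Your argument is correct. The paper itself does not supply a proof of this statement: it is quoted as a preliminary result (Theorem~1.16 of \cite{BohnerDEOTS}) and used without proof, so there is no in-paper argument to compare against. What you wrote is essentially the standard proof from \cite{BohnerDEOTS}: one shows the candidate $\alpha=\bigl(f(\sigma(t))-f(t)\bigr)/\mu(t)$ satisfies the definition by exploiting that any $s$ in a sufficiently small neighborhood satisfies $s\le t$ (hence $|\sigma(t)-s|\ge\mu(t)$), performing the cancellation you display, and then invoking continuity at $t$ to bound $|f(t)-f(s)|$ and $|\alpha|\,|t-s|$ separately.
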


\begin{theorem}[Theorem 1.20 of \cite{BohnerDEOTS}]
\label{tw:differpropdelta}
Let $f,g:\mathbb{T} \rightarrow \mathbb{R}$
be delta differentiable at $t\in\mathbb{T^{\kappa}}$. Then,
\begin{enumerate}

\item the sum $f+g:\mathbb{T} \rightarrow \mathbb{R}$ is delta differentiable at $t$ with
\begin{equation*}
(f+g)^{\Delta}(t)=f^{\Delta}(t)+g^{\Delta}(t);
\end{equation*}

\item for any constant $\alpha$, $\alpha f:\mathbb{T}\rightarrow\mathbb{R}$
is delta differentiable at $t$ with
\begin{equation*}
(\alpha f)^{\Delta}(t)=\alpha f^{\Delta}(t);
\end{equation*}

\item the product $fg:\mathbb{T}\rightarrow\mathbb{R}$
is delta differentiable at $t$ with
\begin{equation*}
(fg)^{\Delta}(t)=f^{\Delta}(t)g(t)+f^{\sigma}g^{\Delta}(t)
=f(t)g^{\Delta}(t)+f^{\Delta}(t)g^{\sigma}(t);
\end{equation*}

\item if $g(t)g^{\sigma}(t)\neq 0$, then $f/g$ is delta differentiable at $t$ with
\begin{equation*}
\left(\frac{f}{g}\right)^{\Delta}(t)
=\frac{f^{\Delta}(t)g(t)-f(t)g^{\Delta}(t)}{g(t)g^{\sigma}(t)}.
\end{equation*}
\end{enumerate}
\end{theorem}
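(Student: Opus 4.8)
The plan is to derive all four statements directly from the definition of the delta derivative, in close parallel with the nabla case of Theorem~\ref{tw:differprop}, with $\sigma$ playing the role of $\rho$. Throughout I would use the standard fact (see \cite{BohnerDEOTS}) that delta differentiability at $t$ forces continuity at $t$; in particular $f(s)\to f(t)$ and $g(s)\to g(t)$ as $s\to t$ in $\mathbb{T}$, and $f,g$ are locally bounded near $t$. Statements 1 and 2 are immediate: fixing $\varepsilon>0$ and a common neighborhood $U$ of $t$ on which the defining inequalities for $f$ and $g$ both hold, the triangle inequality gives
$$
\bigl|(f+g)^{\sigma}(t)-(f+g)(s)-\bigl(f^{\Delta}(t)+g^{\Delta}(t)\bigr)(\sigma(t)-s)\bigr|\le 2\varepsilon\,|\sigma(t)-s|,
$$
and multiplying the inequality for $f$ by $|\alpha|$ gives statement 2 (both conclusions follow after the usual rescaling of $\varepsilon$).

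The heart of the argument is the product rule. First I would record the algebraic identity
\begin{align*}
f^{\sigma}(t)g^{\sigma}(t)-f(s)g(s)
&=\bigl[f^{\sigma}(t)-f(s)-f^{\Delta}(t)(\sigma(t)-s)\bigr]g^{\sigma}(t)\\
&\quad+\bigl[g^{\sigma}(t)-g(s)-g^{\Delta}(t)(\sigma(t)-s)\bigr]f(s)\\
&\quad+\bigl[f^{\Delta}(t)g^{\sigma}(t)+g^{\Delta}(t)f(s)\bigr](\sigma(t)-s),
\end{align*}
which is checked by direct expansion. On a neighborhood where both defining inequalities hold, the first two bracketed differences are at most $\varepsilon|\sigma(t)-s|$ in absolute value; since $g^{\sigma}(t)$ is fixed and $f(s)$ is locally bounded, those two terms together are of order $\varepsilon\,|\sigma(t)-s|$. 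In the last term I would replace the $s$-dependent coefficient $f(s)$ by $f(t)$, writing $g^{\Delta}(t)f(s)=g^{\Delta}(t)f(t)+g^{\Delta}(t)\bigl(f(s)-f(t)\bigr)$ and absorbing the second summand into the error via continuity of $f$ at $t$. Shrinking $U$ so that $|f(s)-f(t)|<\varepsilon$ there, every error contribution is bounded by a fixed constant (depending only on $t$) times $\varepsilon\,|\sigma(t)-s|$, which establishes that $fg$ is delta differentiable at $t$ with $(fg)^{\Delta}(t)=f^{\Delta}(t)g^{\sigma}(t)+f(t)g^{\Delta}(t)$. This is the second displayed form; applying the same result to the product $g\cdot f$ (by interchanging the roles of $f$ and $g$) yields the first form $f^{\Delta}(t)g(t)+f^{\sigma}(t)g^{\Delta}(t)$, and equality of the two expressions then follows from uniqueness of the delta derivative.

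For the quotient rule I would proceed in two steps. First the reciprocal rule: using $g(t)g^{\sigma}(t)\ne 0$ together with continuity of $g$ (so that $g(s)$ stays bounded away from $0$ on a small neighborhood), I would compute
$$
\frac{1}{g^{\sigma}(t)}-\frac{1}{g(s)}=\frac{g(s)-g^{\sigma}(t)}{g^{\sigma}(t)\,g(s)},
$$
substitute $g(s)-g^{\sigma}(t)=-g^{\Delta}(t)(\sigma(t)-s)+o(|\sigma(t)-s|)$ from the differentiability of $g$, and let $g(s)\to g(t)$ in the denominator, obtaining $(1/g)^{\Delta}(t)=-g^{\Delta}(t)/\bigl(g(t)g^{\sigma}(t)\bigr)$; equivalently, one applies the product rule to $g\cdot(1/g)\equiv 1$. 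Combining the reciprocal rule with the product rule applied to $f\cdot(1/g)$ and collecting over the common denominator $g(t)g^{\sigma}(t)$ gives statement 4.

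I expect the only genuine obstacle to be the bookkeeping in the product rule: controlling several error terms at once and, in particular, legitimately replacing the coefficient $f(s)$ by the constant $f(t)$. This is precisely where continuity of delta differentiable functions is needed and where the non-symmetric appearance of the shift $\sigma$ must be tracked carefully; everything else is routine manipulation mirroring the nabla proofs.
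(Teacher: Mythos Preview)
The paper does not give its own proof of this theorem: it is quoted verbatim from \cite[Theorem~1.20]{BohnerDEOTS} as a preliminary result and left unproved. Your argument is correct and is essentially the standard proof found in that reference, so there is nothing to compare against in the present paper.
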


\begin{definition}[Section 1.4 of \cite{MBbook2003}]
A function $f:\mathbb{T}\rightarrow \mathbb{R}$ is called rd-continuous
provided it is continuous at right-dense points in $\mathbb{T}$ and its
left-sided limits exist (finite) at all left-dense points in $\mathbb{T}$.
\end{definition}

The set of all rd-continuous functions
$f:\mathbb{T} \rightarrow \mathbb{R}$ is denoted by
$$
C_{rd}=C_{rd}(\mathbb{T})=C_{rd}(\mathbb{T},\mathbb{R}).
$$
The set of functions $f:\mathbb{T}\rightarrow \mathbb{R}$
that are delta differentiable and whose derivative
is rd-continuous is denoted by
$$
C^{1}_{rd}=C_{rd}^{1}(\mathbb{T})=C^{1}_{rd}(\mathbb{T},\mathbb{R}).
$$

\begin{theorem}[Theorems 1.75 and 1.77 of \cite{BohnerDEOTS}]
If $a,b,c\in\mathbb{T}$, $a\leq c\leq b$, $\alpha\in\mathbb{R}$,
and $f,g\in C_{rd}(\mathbb{T}, \mathbb{R})$, then
\begin{enumerate}

\item $\int\limits_{a}^{b}(f(t)+g(t))\Delta t
=\int\limits_{a}^{b} f(t)\Delta t+\int\limits_{a}^{b}g(t)\Delta t$;

\item $\int\limits_{a}^{b}\alpha f(t)\Delta t
=\alpha \int\limits_{a}^{b} f(t)\Delta t$;

\item $\int\limits_{a}^{b} f(t)\Delta t
=\int\limits_{a}^{c} f(t)\Delta t+\int\limits_{c}^{b} f(t)\Delta t$;

\item $\int\limits_{a}^{a} f(t)\Delta t=0$;

\item if $f,g\in C_{rd}^1(\mathbb{T}, \mathbb{R})$, then
$\int\limits_{a}^{b}f(t)g^{\Delta}(t)\Delta t =\left.f(t)g(t)\right|^{t=b}_{t=a}
-\int\limits_{a}^{b}f^{\Delta}(t)g(\sigma(t))\Delta t$;

\item if $f,g\in C_{rd}^1(\mathbb{T}, \mathbb{R})$, then
$\int\limits_{a}^{b}f(\sigma(t))g^{\Delta}(t)\Delta t
=\left.f(t)g(t)\right|^{t=b}_{t=a}
-\int\limits_{a}^{b}f^{\Delta}(t)g(t)\Delta t$;

\item if $f(t)\geq0$ for all $a\leq t < b$,
then $\int\limits_{a}^{b}f(t)\Delta t \geq 0$;

\item if $t\in\mathbb{T}^{\kappa}$, then
$\int\limits_{t}^{\sigma(t)}f(\tau)\Delta \tau=\mu(t)f(t)$.
\end{enumerate}
\end{theorem}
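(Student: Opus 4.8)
The plan is to base everything on the Fundamental Theorem of Calculus on time scales: every $f \in C_{rd}(\mathbb{T},\mathbb{R})$ possesses a delta-antiderivative $F$ (a function with $F^{\Delta}=f$), and the delta integral is given by $\int_a^b f(t)\Delta t = F(b)-F(a)$. Granting this, each of the eight claims reduces to a corresponding statement from the differential calculus of Theorem~\ref{tw:differpropdelta}.

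First I would dispatch the linearity and interval properties (items 1--4). If $F$ and $G$ are antiderivatives of $f$ and $g$, then the sum and constant-multiple rules of Theorem~\ref{tw:differpropdelta} show that $F+G$ and $\alpha F$ are antiderivatives of $f+g$ and $\alpha f$; evaluating these at the endpoints yields items 1 and 2. Item 3 (additivity over the interval) and item 4 (vanishing over a degenerate interval) are immediate telescoping consequences of $\int_a^b f = F(b)-F(a)$, since $(F(c)-F(a))+(F(b)-F(c))=F(b)-F(a)$ and $F(a)-F(a)=0$.

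Next come the two integration-by-parts formulas (items 5 and 6), which I would obtain by integrating the two forms of the product rule. Writing $(fg)^{\Delta}=fg^{\Delta}+f^{\Delta}g^{\sigma}=f^{\Delta}g+f^{\sigma}g^{\Delta}$ from Theorem~\ref{tw:differpropdelta} and integrating over $[a,b]$, the Fundamental Theorem gives $\int_a^b (fg)^{\Delta}\Delta t = \left.f(t)g(t)\right|_{t=a}^{t=b}$; rearranging the first form yields item 5 and rearranging the second yields item 6. For the positivity statement (item 7) I would show that an antiderivative of a nonnegative function is nondecreasing, checking this separately at right-scattered points (where $F^{\Delta}(t)=(F^{\sigma}(t)-F(t))/\mu(t)\geq 0$ forces $F^{\sigma}(t)\geq F(t)$ by Theorem~\ref{rozniczka delta}) and at right-dense points (via a limiting/mean-value argument), so that $F(b)\geq F(a)$. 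Finally, item 8 is a direct evaluation: using the identity $F^{\sigma}(t)=F(t)+\mu(t)F^{\Delta}(t)$ at $t\in\mathbb{T}^{\kappa}$ (which follows from Theorem~\ref{rozniczka delta} at scattered points and holds trivially where $\mu(t)=0$), one gets $\int_t^{\sigma(t)} f(\tau)\Delta\tau = F(\sigma(t))-F(t)=\mu(t)F^{\Delta}(t)=\mu(t)f(t)$.

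The main obstacle is the foundational step I have assumed rather than the eight properties themselves: establishing the \emph{existence} of an rd-continuous antiderivative $F$ for every rd-continuous $f$, which is what makes the delta integral well-defined and licenses all of the manipulations above. Proving this existence (the pre-antiderivative construction for regulated functions underlying the Fundamental Theorem) is the genuinely substantive content; once it is in place, items 1--8 are essentially bookkeeping translations of the delta-differentiation rules.
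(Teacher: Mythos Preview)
Your proposal is sound, but note that the paper does not actually prove this statement: it is quoted verbatim from \cite{BohnerDEOTS} as a preliminary result (Theorems~1.75 and~1.77 there), with no proof given in the present paper. So there is no ``paper's own proof'' to compare against.

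That said, your outline matches the standard argument in \cite{BohnerDEOTS}: one first establishes existence of a pre-antiderivative for every rd-continuous function (this is Theorem~1.74 of \cite{BohnerDEOTS}, the substantive analytic content you correctly flag as the real work), then defines the Cauchy delta integral via $\int_a^b f\,\Delta t = F(b)-F(a)$, and finally reads off items~1--8 from the delta-differentiation rules of Theorem~\ref{tw:differpropdelta} exactly as you describe. The one place where your sketch is a bit thin is item~7 at right-dense points: the ``limiting/mean-value argument'' you allude to needs the Mean Value Inequality on time scales (or an induction-principle argument as in \cite[Theorem~1.76]{BohnerDEOTS}) rather than a classical mean-value theorem, since the latter is not available verbatim. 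This is a routine fix, not a genuine gap.
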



\subsection{Relation between delta and nabla approaches to time-scale calculus}
\label{sub:sec:relation:delta:nabla}

It is possible to relate the approach of Section~\ref{sub:sec:2.1}
with that of Section~\ref{sub:sec:2.2}.

\begin{theorem}[Theorems 2.5 and 2.6 of \cite{AticiGreen's_functions}]
\label{polaczenie rozniczka delta i nabla}
If $f:\mathbb{T}\rightarrow\mathbb{R}$ is delta differentiable on $\mathbb{T}^{\kappa}$
and $f^{\Delta}$ is continuous on $\mathbb{T}^{\kappa}$,
then $f$ is nabla differentiable on $\mathbb{T}_{\kappa}$ with
\begin{equation}
\label{polaczenie rozniczka nabla}
f^{\nabla}(t)=(f^{\Delta})^{\rho}(t) \textrm{ for all } t\in\mathbb{T}_{\kappa}.
\end{equation}
If $f:\mathbb{T}\rightarrow\mathbb{R}$ is nabla differentiable
on $\mathbb{T}_{\kappa}$ and $f^{\nabla}$ is continuous on $\mathbb{T}_{\kappa}$,
then $f$ is delta differentiable on $\mathbb{T}^{\kappa}$ with
\begin{equation}
\label{polaczenie rozniczka delta}
f^{\Delta}(t)=(f^{\nabla})^{\sigma}(t) \textrm{ for all } t\in\mathbb{T}^{\kappa}.
\end{equation}
\end{theorem}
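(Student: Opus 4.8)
The plan is to fix an arbitrary point and prove each of the two claims by distinguishing the local structure of the time scale there; I treat Part~1 in detail, Part~2 being completely dual under the exchange $\sigma\leftrightarrow\rho$, left $\leftrightarrow$ right, $\Delta\leftrightarrow\nabla$. So fix $t\in\mathbb{T}_{\kappa}$. First I would dispose of the \emph{left-scattered} case: if $\rho(t)<t$, then delta differentiability of $f$ on $\mathbb{T}^{\kappa}$ forces $f$ to be continuous at $t$, so Theorem~\ref{rozniczka nabla} applies and gives $f^{\nabla}(t)=\bigl(f(t)-f(\rho(t))\bigr)/\nu(t)$. The key observation is that $\rho(t)$ lies in $\mathbb{T}^{\kappa}$ and is right-scattered with $\sigma(\rho(t))=t$ and $\mu(\rho(t))=t-\rho(t)=\nu(t)$; hence Theorem~\ref{rozniczka delta} evaluated at $\rho(t)$ yields $f^{\Delta}(\rho(t))=\bigl(f(t)-f(\rho(t))\bigr)/\nu(t)$. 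Comparing the two quotients gives $f^{\nabla}(t)=f^{\Delta}(\rho(t))=(f^{\Delta})^{\rho}(t)$, as required.

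The remaining case, $t$ \emph{left-dense} (so $\rho(t)=t$ and the target identity reads $f^{\nabla}(t)=f^{\Delta}(t)$), is where the continuity hypothesis on $f^{\Delta}$ does the real work, and this is the step I expect to be the main obstacle. I would split it according to whether $t$ is right-dense or right-scattered. If $t$ is also right-dense then $\sigma(t)=t$, and the defining inequality for $f^{\Delta}(t)$ becomes literally the defining inequality for $f^{\nabla}(t)$ with candidate value $f^{\Delta}(t)$; the two definitions coincide and nothing more is needed. The delicate situation is $t$ left-dense but right-scattered: here $f^{\Delta}(t)=\bigl(f^{\sigma}(t)-f(t)\bigr)/\mu(t)$ is a forward difference quotient, whereas $f^{\nabla}(t)$ is a genuine one-sided limit from the left, and these need not agree for an arbitrary delta-differentiable $f$.

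To force agreement I would exploit continuity of $f^{\Delta}$ at $t$: given $\varepsilon>0$, choose $\delta\in(0,\mu(t))$ so that $|f^{\Delta}(\tau)-f^{\Delta}(t)|\le\varepsilon$ whenever $\tau\in(t-\delta,t+\delta)\cap\mathbb{T}$. Because $\delta<\mu(t)$, the neighborhood $(t-\delta,t+\delta)\cap\mathbb{T}$ contains no point to the right of $t$, so only $s\le t$ must be checked in the nabla definition. For such $s$ I would write $f(t)-f(s)=\int_{s}^{t}f^{\Delta}(\tau)\,\Delta\tau$ (the delta fundamental theorem of calculus, valid since $f^{\Delta}$ is rd-continuous) together with $f^{\Delta}(t)(t-s)=\int_{s}^{t}f^{\Delta}(t)\,\Delta\tau$, and then estimate
$$
\left|f(t)-f(s)-f^{\Delta}(t)(t-s)\right|
=\left|\int_{s}^{t}\bigl(f^{\Delta}(\tau)-f^{\Delta}(t)\bigr)\,\Delta\tau\right|
\le\varepsilon\,(t-s),
$$
invoking the positivity/monotonicity property of the delta integral. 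This verifies the nabla-derivative inequality with value $f^{\Delta}(t)=f^{\Delta}(\rho(t))$ and completes Part~1.

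Part~2 then follows verbatim after interchanging the delta and nabla roles: one uses Theorem~\ref{rozniczka delta} and Theorem~\ref{rozniczka nabla} in the opposite order for the \emph{right-scattered} case, obtaining $f^{\Delta}(t)=f^{\nabla}(\sigma(t))$ from the common difference quotient, while the analogous obstacle is the right-dense, left-scattered sub-case, dispatched the same way by continuity of $f^{\nabla}$ and the nabla fundamental theorem of calculus. The only subtlety I would flag for both parts is the edge behavior at $\sup\mathbb{T}$ or $\inf\mathbb{T}$, where one must check that the relevant one-sided derivative is actually being requested on $\mathbb{T}_{\kappa}$ (resp.\ $\mathbb{T}^{\kappa}$) and that continuity of $f$ there is available; in all interior points the argument above is uniform.
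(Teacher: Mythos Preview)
The paper does not prove this theorem: it is stated in the Preliminaries (Section~\ref{sub:sec:relation:delta:nabla}) as a quoted result from \cite{AticiGreen's_functions} and carries no proof in this paper. There is therefore nothing to compare your attempt against here.

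That said, your argument is correct and is essentially the standard one. The left-scattered case reduces, as you note, to matching two explicit difference quotients via Theorems~\ref{rozniczka nabla} and~\ref{rozniczka delta}; the left-dense/right-dense sub-case is immediate from the definitions; and the left-dense/right-scattered sub-case is exactly where continuity of $f^{\Delta}$ is needed, and your use of the delta fundamental theorem of calculus together with the integral monotonicity property is the right mechanism. One small point worth tightening: in the left-scattered case you should verify that $f$ is continuous at $t$ even when $t=\sup\mathbb{T}$ (so that $t\notin\mathbb{T}^{\kappa}$ and delta differentiability at $t$ is not assumed); this is harmless since such a $t$ is isolated from the right and you only need the left limit, but it deserves a sentence. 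The dual Part~2 goes through exactly as you indicate.
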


\begin{theorem}[Proposition 7 of \cite{Gurses}]
\label{polaczenie calka delta i nabla}
If function $f:\mathbb{T}\rightarrow\mathbb{R}$ is continuous,
then for all $a,b\in\mathbb{T}$ with $a<b$ we have
\begin{gather}
\int\limits_{a}^{b}f(t)\Delta t
=\int\limits_{a}^{b}f^{\rho}(t)\nabla t,\label{polaczenie calka delta}\\
\int\limits_{a}^{b}f(t)\nabla t
=\int\limits_{a}^{b}f^{\sigma}(t)\Delta t. \label{polaczenie calka nabla}
\end{gather}
\end{theorem}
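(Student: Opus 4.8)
The plan is to route both identities through antiderivatives, converting each integral into an evaluation at the endpoints via the fundamental theorem of calculus, and then using the pointwise relation between delta and nabla derivatives from Theorem~\ref{polaczenie rozniczka delta i nabla} to identify the integrands. First I would observe that continuity of $f$ on $\mathbb{T}$ implies that $f$ is simultaneously rd-continuous and ld-continuous, so both the delta and the nabla integrals appearing in the statement are well defined, and moreover $f$ admits both a delta antiderivative and a nabla antiderivative (a standard fact from \cite{BohnerDEOTS,MBbook2003}).

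For the first identity \eqref{polaczenie calka delta}, I would fix a delta antiderivative $F$ of $f$, so that $F^{\Delta}(t)=f(t)$ on $\mathbb{T}^{\kappa}$. Since $f$ is continuous, $F^{\Delta}$ is continuous, so the hypotheses of the first part of Theorem~\ref{polaczenie rozniczka delta i nabla} are met, and I obtain that $F$ is nabla differentiable with
\begin{equation*}
F^{\nabla}(t)=(F^{\Delta})^{\rho}(t)=f^{\rho}(t)
\quad\text{for all } t\in\mathbb{T}_{\kappa}.
\end{equation*}
Applying the fundamental theorem of calculus to each side then gives
$\int_{a}^{b}f(t)\,\Delta t=F(b)-F(a)=\int_{a}^{b}f^{\rho}(t)\,\nabla t$,
which is exactly \eqref{polaczenie calka delta}.

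The second identity \eqref{polaczenie calka nabla} follows by the symmetric argument: I would take a nabla antiderivative $G$ of $f$, use continuity of $G^{\nabla}=f$ together with the second part of Theorem~\ref{polaczenie rozniczka delta i nabla} to conclude $G^{\Delta}(t)=(G^{\nabla})^{\sigma}(t)=f^{\sigma}(t)$ on $\mathbb{T}^{\kappa}$, and then evaluate both integrals as $G(b)-G(a)$. The computational content here is essentially trivial once the setup is in place; the only point requiring care is the verification that the continuity hypothesis of Theorem~\ref{polaczenie rozniczka delta i nabla} is genuinely satisfied, which it is, since in each case the relevant derivative of the antiderivative equals $f$, assumed continuous by hypothesis. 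Thus the main obstacle is not an analytic difficulty but simply invoking the correct existence theorem for antiderivatives and the fundamental theorem of calculus in the delta and nabla settings, both of which are standard.
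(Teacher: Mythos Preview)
The paper does not actually prove this theorem; it is stated in the preliminaries as a result quoted from the literature (Proposition~7 of \cite{Gurses}), so there is no in-paper proof to compare against. Your argument is correct and is in fact the standard one: continuity of $f$ ensures the existence of a delta antiderivative $F$ with $F^{\Delta}=f$ continuous, Theorem~\ref{polaczenie rozniczka delta i nabla} then gives $F^{\nabla}=(F^{\Delta})^{\rho}=f^{\rho}$, and the fundamental theorem of calculus in each calculus reduces both sides of \eqref{polaczenie calka delta} to $F(b)-F(a)$; the symmetric reasoning with a nabla antiderivative handles \eqref{polaczenie calka nabla}.
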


For a different approach relating the delta and the nabla calculi,
based on duality, we refer the reader to \cite{MR2771298,MR2957726}.


\section{Main results}
\label{sec:mainResults}

By $\mathcal{C}^{1}$ we denote the class of continuous
functions $y:[a,b]\rightarrow\mathbb{R}$ that are simultaneously
delta and nabla differentiable with $y^{\Delta}(t)$
and $y^{\nabla}(t)$ continuous on $[a,b]_{\kappa}^{\kappa}$.
Let $k,n \in \mathbb{N} = \{1, 2, \ldots\}$,
let $\mathbb{T}$ be a given time scale with at least three points,
and let $a,b\in\mathbb{T}$. We consider the following
general problem of the calculus of variations on time scales.

\begin{prob:P:T}
Find a function $y\in \mathcal{C}^{1}$ that extremizes (minimizes or maximizes) the functional
\begin{multline}
\label{problem}
\mathcal{L}[y]=H\left(\int\limits_{a}^{b}f_{1}(t,y^{\sigma}(t),y^{\Delta}(t))\Delta t,
\ldots, \int\limits_{a}^{b}f_{k}(t,y^{\sigma}(t),y^{\Delta}(t))\Delta t,\right.\\
\left.\int\limits_{a}^{b}f_{k+1}(t,y^{\rho}(t),y^{\nabla}(t))\nabla t, \ldots,
\int\limits_{a}^{b}f_{k+n}(t,y^{\rho}(t),y^{\nabla}(t)) \nabla t\right)
\end{multline}
subject to the boundary conditions
\begin{equation}
\label{punkty}
y(a)=y_{a}, \quad y(b)=y_{b},
\end{equation}
and under the assumptions that
\begin{enumerate}

\item function $H:\mathbb{R}^{n+k}\rightarrow\mathbb{R}$
has continuous partial derivatives with respect to its arguments,
which we denote by $H_{i}^{'}$, $i=1, \ldots, n+k$;

\item functions $(t,y,v) \rightarrow f_{i}(t,y,v)$ from
$[a,b]\times\mathbb{R}^{2}$ to $\mathbb{R}$, $i=1, \ldots, n+k$,
have continuous partial derivatives with respect to
$y$ and $v$ uniformly in $t\in [a,b]$, which we denote by $f_{iy}$ and $f_{iv}$, respectively;

\item functions $f_{i}$, $f_{iy}$, $f_{iv}$ are rd-continuous in $t\in [a,b]^{\kappa}$,
$i=1, \ldots, k$, and ld-continuous in $t\in [a,b]_{\kappa}$,
$i=k+1, \ldots, k+n$, for all $y\in \mathcal{C}^{1}$.
\end{enumerate}
\end{prob:P:T}

A function $y\in \mathcal{C}^{1}$ is said to be \emph{admissible} provided
it satisfies the boundary conditions \eqref{punkty}. In order to introduce
the notion of solution to our problem, we consider the following norm in $\mathcal{C}^{1}$:
$$
||y||_{1,\infty}:=||y^{\sigma}||_{\infty}
+||y^{\Delta}||_{\infty}+||y^{\rho}||_{\infty}+||y^{\nabla}||_{\infty},
$$
where $||y||_{\infty}:= \sup_{t \in [a,b] \cap \mathbb{T}_\kappa^\kappa} |y(t)|$.

\begin{definition}
\label{def:sol:P}
We say that an admissible function $\hat{y}$ is a local minimizer
(respectively, local maximizer) to problem \eqref{problem}--\eqref{punkty}
if there exists $\delta >0$ such that $\mathcal{L}[\hat{y}]\leq \mathcal{L}[y]$
(respectively, $\mathcal{L}[\hat{y}]\geq \mathcal{L}[y]$) for all admissible
functions $y\in \mathcal{C}^{1}$ satisfying the inequality $||y-\hat{y}||_{1,\infty}<\delta$.
\end{definition}

For brevity, in what follows we omit the argument of $H_{i}^{'}$. Precisely,
$H_{i}^{'}:=H_{i}^{'}(\mathcal{F}_{1}[\hat{y}], \ldots, \mathcal{F}_{k+n}[\hat{y}])$, where
$\mathcal{F}_{i}[\hat{y}] := \int\limits_{a}^{b}f_{i}(t,\hat{y}^{\sigma}(t),\hat{y}^{\Delta}(t))\Delta t$
for $i=1, \ldots, k$ and $\mathcal{F}_{i}[\hat{y}]
:= \int\limits_{a}^{b}f_{i}(t,\hat{y}^{\rho}(t),\hat{y}^{\nabla}(t))\nabla t$ for $i=k+1, \ldots, k+n$.
In contrast with \cite{MR3040923}, where integral conditions are investigated,
here we are interested in obtaining Euler--Lagrange type optimality
conditions in differential form.

\begin{remark}
If one considers the particular case where function $H$
in problem \eqref{problem}--\eqref{punkty}
does not depend on nabla operators, then one obtains exactly
the delta problem studied in \cite{MalinowskaTorresCompositionDelta}.
In this case, the assumptions we are considering for problem \eqref{problem}--\eqref{punkty}
coincide with the ones of \cite{MalinowskaTorresCompositionDelta}.
However, it should be noted that when it is written $\frac{\Delta}{\Delta t}$ or
$\frac{\nabla}{\nabla t}$ for some given expression, this is formal
and does not mean that one can really expand the delta (or nabla) derivative.
Such formal expressions are common in the literature of calculus of variations
(see, e.g., \cite[Theorem~1 of Section~4]{MR0160139},
\cite[Corollary~2 to Theorem~2.3]{MR1210325} or \cite[Section 6.1]{MR1363262}).
All our expressions are valid in integral form (see \cite{MR3040923}).
\end{remark}

\begin{theorem}[The delta-nabla Euler--Lagrange equations]
\label{main}
Let $\mathbb{\tilde{T}}$ be a time scale with $a, b \in \mathbb{\tilde{T}}$ and
$\mathbb{T} := [a,b] \cap \mathbb{\tilde{T}}$ having at least three points.
If $\hat{y}$ is a solution to problem \eqref{problem}--\eqref{punkty},
in the sense of Definition~\ref{def:sol:P},
then the following delta-nabla Euler--Lagrange equations
hold for all $t\in\mathbb{T}_{\kappa}^{\kappa}$:
\begin{multline}
\label{eq 1 delta}
\sum\limits_{i=1}^{k}H_{i}^{'}\cdot \left(
f_{iy}[\hat{y}](t)-\frac{\Delta}{\Delta t}f_{iv}[\hat{y}](t)
\right)
+\sum\limits_{i=k+1}^{k+n}H_{i}^{'}\cdot \left(
f_{iy}\{\hat{y}\}(\sigma(t))
-\frac{\Delta}{\Delta t}f_{iv}\{\hat{y}\}(t) \right)\\
+\frac{\Delta}{\Delta t}
\left[\sum\limits_{i=k+1}^{k+n}H_{i}^{'}\cdot
\nu(t)\cdot \left(
f_{iy}\{\hat{y}\}(t)-\frac{\nabla}{\nabla t}f_{iv}\{\hat{y}\}(t)\right)
\right]^{\sigma}(t)  = 0
\end{multline}
and
\begin{multline}
\label{eq 2 nabla}
\sum\limits_{i=1}^{k}H_{i}^{'}\cdot
\left(
f_{iy}[\hat{y}](\rho(t))-\frac{\nabla}{\nabla t}f_{iv}[\hat{y}](t)
\right)
+\sum\limits_{i=k+1}^{k+n}H_{i}^{'}\cdot
\left(
f_{iy}\{\hat{y}\}(t)-\frac{\nabla}{\nabla t}f_{iv}\{\hat{y}\}(t)
\right)\\
-\frac{\nabla}{\nabla t}
\left[
\sum\limits_{i=1}^{k}H_{i}^{'}\cdot
\mu(t)\cdot
\left(
f_{iy}[\hat{y}](t)-\frac{\Delta}{\Delta t}f_{iv}[\hat{y}](t)
\right) \right]^{\rho}(t) = 0,
\end{multline}
where
$[\hat{y}](t)=\left(t, \hat{y}^{\sigma}(t),\hat{y}^{\Delta}(t)\right)$ and
$\{\hat{y}\}(t)=\left(t,\hat{y}^{\rho}(t),\hat{y}^{\nabla}(t)\right)$.
\end{theorem}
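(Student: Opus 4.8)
The plan is to run the classical first-variation argument adapted to the mixed delta-nabla functional, and then to reduce the resulting single variational identity in two different ways: once to a purely delta form (yielding \eqref{eq 1 delta}) and once to a purely nabla form (yielding \eqref{eq 2 nabla}). First I would fix an admissible variation, i.e. a function $h\in\mathcal{C}^{1}$ with $h(a)=h(b)=0$, and consider $\phi(\varepsilon):=\mathcal{L}[\hat{y}+\varepsilon h]$. Since $\hat{y}$ is a local extremizer, $\varepsilon=0$ is an interior extremum of $\phi$, so $\phi'(0)=0$. The continuity of the partials $H_{i}'$ together with the uniform continuity of $f_{iy},f_{iv}$ (hypotheses 1--3) justify differentiating under the integral sign and applying the chain rule, giving the variational identity $\sum_{i=1}^{k}H_{i}'\int_{a}^{b}(f_{iy}[\hat{y}]\,h^{\sigma}+f_{iv}[\hat{y}]\,h^{\Delta})\,\Delta t+\sum_{i=k+1}^{k+n}H_{i}'\int_{a}^{b}(f_{iy}\{\hat{y}\}\,h^{\rho}+f_{iv}\{\hat{y}\}\,h^{\nabla})\,\nabla t=0$, where each $H_{i}'$ is the constant obtained by evaluating at $\hat{y}$. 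Both Euler--Lagrange equations are extracted from this one identity.

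To obtain \eqref{eq 1 delta} I would put everything in delta form with $h^{\sigma}$ as the test factor. The delta integrals are handled directly by delta integration by parts, the boundary terms vanishing since $h(a)=h(b)=0$; this produces the factor $f_{iy}[\hat{y}]-\frac{\Delta}{\Delta t}f_{iv}[\hat{y}]$ against $h^{\sigma}$, i.e. the first sum of \eqref{eq 1 delta}. For the nabla integrals I would first integrate by parts in the nabla sense to reach $\int_{a}^{b}C_{i}\,h^{\rho}\,\nabla t$ with $C_{i}:=f_{iy}\{\hat{y}\}-\frac{\nabla}{\nabla t}f_{iv}\{\hat{y}\}$, then convert the nabla integral into a delta integral through \eqref{polaczenie calka nabla}. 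Using $(h^{\nabla})^{\sigma}=h^{\Delta}$ from \eqref{polaczenie rozniczka delta}, the pointwise identity $(h^{\rho})^{\sigma}=h$, and $h=h^{\sigma}-\mu h^{\Delta}$ from Theorem~\ref{rozniczka delta}, a further delta integration by parts moves the whole nabla contribution onto $h^{\sigma}$. Collecting terms and invoking $(f^{\nabla})^{\sigma}=f^{\Delta}$ and $\nu(\sigma(t))=\mu(t)$ identifies the coefficient of $h^{\sigma}$ with the remaining terms of \eqref{eq 1 delta}. Finally, the fundamental lemma of the calculus of variations in delta form (applicable because the coefficient is rd-continuous by hypothesis 3) yields \eqref{eq 1 delta} for all $t\in\mathbb{T}_{\kappa}^{\kappa}$.

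The derivation of \eqref{eq 2 nabla} is completely dual: the nabla integrals are now left in nabla form and produce the factor against $h^{\rho}$, while the delta integrals are converted into nabla integrals via \eqref{polaczenie calka delta}, using the dual identities $(h^{\Delta})^{\rho}=h^{\nabla}$ (from \eqref{polaczenie rozniczka nabla}), $(h^{\sigma})^{\rho}=h$, $h=h^{\rho}+\nu h^{\nabla}$ (from Theorem~\ref{rozniczka nabla}), $(f^{\Delta})^{\rho}=f^{\nabla}$, and $\mu(\rho(t))=\nu(t)$; a concluding nabla integration by parts and the nabla fundamental lemma give \eqref{eq 2 nabla}.

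The main obstacle I anticipate is the bookkeeping of the delta/nabla conversion rather than any single deep step: one must carefully track the compositions of $\sigma$ and $\rho$ and the graininess factors $\mu,\nu$ through the integrations by parts, and in particular justify the pointwise identities $(h^{\rho})^{\sigma}=h$ and $\nu\circ\sigma=\mu$. These hold at right-scattered points because $\rho(\sigma(t))=t$ there, while at right-dense points the two sides differ only on a set of delta-measure zero and the discrepancy is absorbed by continuity (dually for the nabla equation). A secondary point is to make sure that, after all integrations by parts, the identity really has been reduced to the single form $\int_{a}^{b}g\,h^{\sigma}\,\Delta t=0$ (respectively $\int_{a}^{b}g\,h^{\rho}\,\nabla t=0$), with no stray $h^{\Delta}$ (respectively $h^{\nabla}$) term, so that the fundamental lemma applies and forces $g\equiv 0$. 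Since the remark preceding the theorem stresses that the symbols $\frac{\Delta}{\Delta t}$ and $\frac{\nabla}{\nabla t}$ are formal, the cleanest rigorous route keeps the identity in integral form until the last step and reads the differential equations \eqref{eq 1 delta}--\eqref{eq 2 nabla} off the integrand.
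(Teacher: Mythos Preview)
Your proposal is essentially the paper's proof: first variation, integration by parts in both pieces to reach $\int_{a}^{b}s\,h^{\sigma}\,\Delta t+\int_{a}^{b}r\,h^{\rho}\,\nabla t=0$, conversion of one integral to match the other, and the fundamental lemma. The one place where the paper differs is precisely the conversion step, and its ordering eliminates the obstacle you anticipate: rather than converting $\int r\,h^{\rho}\,\nabla t$ to a delta integral first and then appealing to $(h^{\rho})^{\sigma}=h$ and $\nu\circ\sigma=\mu$ (which fail at points that are simultaneously right-dense and left-scattered), the paper expands $h^{\rho}=h-\nu h^{\nabla}$ \emph{inside} the nabla integral, and only afterwards applies \eqref{polaczenie calka nabla} together with $(h^{\nabla})^{\sigma}=h^{\Delta}$. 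This yields $\int r^{\sigma}h^{\sigma}\,\Delta t-\int (r\nu)^{\sigma}h^{\Delta}\,\Delta t$ using only identities valid pointwise everywhere, and one further delta integration by parts puts everything against $h^{\sigma}$ with coefficient $s+r^{\sigma}+\frac{\Delta}{\Delta t}(r\nu)^{\sigma}$. So the measure-zero/continuity patch you propose is unnecessary; the difficulty is an artifact of your chosen order of operations. The dual reordering (expand $h^{\sigma}=h+\mu h^{\Delta}$ before converting via \eqref{polaczenie calka delta}) handles \eqref{eq 2 nabla} in the same way.
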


\begin{proof}
Suppose that $\mathcal{L}\left[y\right]$ has a local extremum at $\hat{y}$.
Consider a variation $h\in \mathcal{C}^{1}$ of  $\hat{y}$ for which we define the function
$\phi:\mathbb{R} \rightarrow \mathbb{R}$ by
$\phi(\varepsilon)=\mathcal{L}\left[\hat{y}+\varepsilon h\right]$.
A necessary condition for $\hat{y}$ to be an extremizer for $\mathcal{L}\left[y\right]$
is given by $\phi'\left(\varepsilon\right)=0$ for $\varepsilon =0$.
Using the chain rule, we obtain that
\begin{multline*}
\phi'\left(0 \right)
=\sum\limits_{i=1}^{k}H_{i}^{'}\int\limits_{a}^{b}
\left(f_{iy}[\hat{y}](t)h^{\sigma}(t)
+f_{iv}[\hat{y}](t)h^{\Delta}(t)\right)\Delta t\\
+\sum\limits_{i=k+1}^{k+n}H_{i}^{'}\int\limits_{a}^{b}
\left(f_{iy}\{\hat{y}\}(t)h^{\rho}(t)
+f_{iv}\{\hat{y}\}(t)h^{\nabla}(t)\right)\nabla t = 0.
\end{multline*}
Using relations
\begin{equation*}
(fg)^{\nabla}(t)=f^{\nabla}(t)g(t)+f^{\rho}g^{\nabla}(t)
=f(t)g^{\nabla}(t)+f^{\nabla}(t)g^{\rho}(t)
\end{equation*}
and
\begin{equation*}
(fg)^{\Delta}(t)=f^{\Delta}(t)g(t)+f^{\sigma}g^{\Delta}(t)
=f(t)g^{\Delta}(t)+f^{\Delta}(t)g^{\sigma}(t),
\end{equation*}
one has
\begin{equation*}
\left(
f_{iv}[\hat{y}](t)h(t)\right)^{\Delta}
=f_{iv}[\hat{y}](t)h^{\Delta}(t)+
\left(f_{iv}[\hat{y}](t)\right)^{\Delta}h^{\sigma}(t)
\end{equation*}
and
\begin{equation*}
\left(
f_{iv}\{\hat{y}\}(t)h(t)\right)^{\nabla}
=f_{iv}\{\hat{y}\}(t)h^{\nabla}(t)+
\left(f_{iv}\{\hat{y}\}(t)\right)^{\nabla}h^{\rho}(t).
\end{equation*}
Integrating both sides from $t=a$ to $t=b$ and having in mind that from
\eqref{punkty} one has $h(a) = h(b) = 0$, we obtain that
\begin{multline*}
\int\limits_{a}^{b}\sum\limits_{i=1}^{k}H_{i}^{'}\cdot
\left(f_{iy}[\hat{y}](t)
-\left(f_{iv}[\hat{y}](t)\right)^{\Delta}\right)h^{\sigma}(t)\Delta t\\
+\int\limits_{a}^{b}\sum\limits_{i=k+1}^{k+n}H_{i}^{'}\cdot
\left(f_{iy}\{\hat{y}\}(t)
-\left(f_{iv}\{\hat{y}\}(t)\right)^{\nabla}\right)h^{\rho}(t)\nabla t = 0.
\end{multline*}
Let us denote
\begin{equation*}
\begin{split}
s(t) &:= \sum\limits_{i=1}^{k}H_{i}^{'}\cdot
\left(f_{iy}[\hat{y}](t)
-\left(f_{iv}[\hat{y}](t)\right)^{\Delta}\right),\\
r(t) &:= \sum\limits_{i=k+1}^{k+n}H_{i}^{'}\cdot
\left(f_{iy}\{\hat{y}\}(t)-\left(f_{iv}\{\hat{y}\}(t)\right)^{\nabla}\right).
\end{split}
\end{equation*}
Then,
\begin{equation*}
\int\limits_{a}^{b}s(t)h^{\sigma}(t)\Delta t
+\int\limits_{a}^{b}r(t)h^{\rho}(t)\nabla t = 0.
\end{equation*}
Now we split the proof into two cases. First we use \eqref{polaczenie calka nabla}
of Theorem~\ref{polaczenie calka delta i nabla} and \eqref{polaczenie rozniczka delta}
of Theorem~\ref{polaczenie rozniczka delta i nabla} to obtain the Euler--Lagrange
equation \eqref{eq 1 delta}. Next we apply \eqref{polaczenie calka delta} of
Theorem~\ref{polaczenie calka delta i nabla} and \eqref{polaczenie rozniczka nabla}
of Theorem~\ref{polaczenie rozniczka delta i nabla} to receive
the latter Euler--Lagrange equation \eqref{eq 2 nabla}.

(i) Since $h$ is nabla differentiable, we have that
$h^{\rho}(t)=h(t)-\nu(t) h^{\nabla}(t)$
(cf. item (iv) of \cite[Theorem~3.2]{da:jb:le:ap:hnt}) and thus
$$
\int\limits_{a}^{b}s(t)h^{\sigma}(t)\Delta t
+\int\limits_{a}^{b}\left(r(t)h(t)-r(t)\nu (t)h^{\nabla}(t)\right)\nabla t = 0.
$$
Using equation \eqref{polaczenie calka nabla}
of Theorem~\ref{polaczenie calka delta i nabla}, it follows that
$$
\int\limits_{a}^{b}s(t)h^{\sigma}(t)\Delta t
+\int\limits_{a}^{b}\left[(rh)^{\sigma}(t)
-(r\nu)^{\sigma}(t)(h^{\nabla})^{\sigma}(t)\right]\Delta t = 0.
$$
Therefore, from equation \eqref{polaczenie rozniczka delta}
of Theorem~\ref{polaczenie rozniczka delta i nabla}, we obtain
$$
\int\limits_{a}^{b}s(t)h^{\sigma}(t)\Delta t
+\int\limits_{a}^{b}\left[(rh)^{\sigma}(t)-(r\nu)^{\sigma}(t)h^{\Delta}(t)\right]\Delta t = 0.
$$
Integrating the second part of the latter integral gives
$$
\int\limits_{a}^{b}(r\nu)^{\sigma}(t)h^{\Delta}(t)\Delta t
=(r\nu)^{\sigma}(t)h(t)\bigg|_{a}^{b}
-\int\limits_{a}^{b}h^{\sigma}(t)\frac{\Delta}{\Delta t}(r\nu)^{\sigma}(t)\Delta t
$$
and it follows that
$$
\int\limits_{a}^{b}\left[s(t)h^{\sigma}(t)
+r^{\sigma}(t)h^{\sigma}(t)+h^{\sigma}(t)\frac{\Delta}{\Delta t}(r\nu)^{\sigma}(t)\right]\Delta t = 0.
$$
Thus,
$$
\int\limits_{a}^{b}\left[s(t)
+r^{\sigma}(t)+\frac{\Delta}{\Delta t}(r\nu)^{\sigma}(t)\right]h^{\sigma}(t)\Delta t = 0.
$$
From the fundamental lemma of the delta calculus of variations (cf. Lemma~8 of \cite{MR1908827}
and Lemma~3.2 of \cite{MR2405376}), we get the Euler--Lagrange equation
$$
s(t)+r^{\sigma}(t)+\frac{\Delta}{\Delta t}(r\nu)^{\sigma}(t) = 0
$$
and therefore \eqref{eq 1 delta} holds.

(ii) Since $h$ is delta differentiable, the following relation holds
(cf. item (iv) of \cite[Theorem~1.3]{mb:gg:ap}):
$$
h^{\sigma}(t)=h(t)+\mu(t) h^{\Delta}(t).
$$
We then obtain that
$$
\int\limits_{a}^{b}s(t)h(t)+s(t)\mu (t) h^{\Delta}(t)\Delta t
+\int\limits_{a}^{b}r(t)h^{\rho}(t)\nabla t = 0.
$$
Using equation \eqref{polaczenie calka delta} of
Theorem~\ref{polaczenie calka delta i nabla},
$$
\int\limits_{a}^{b}\left[s^{\rho}(t)h^{\rho}(t)
+\left(s\mu\right)^{\rho}(t) (h^{\Delta})^{\rho}(t)
+r(t)h^{\rho}(t)
\right]\nabla t = 0.
$$
It follows, from equation \eqref{polaczenie rozniczka nabla}
of Theorem~\ref{polaczenie rozniczka delta i nabla}, that
$$
\int\limits_{a}^{b}\left[s^{\rho}(t)h^{\rho}(t)
+\left(s\mu\right)^{\rho}(t) h^{\nabla}(t)
+r(t)h^{\rho}(t)
\right]\nabla t = 0.
$$
Integrating the second item of the above integral,
$$
\int\limits_{a}^{b}
(s\mu )^{\rho}(t) h^{\nabla}(t)\nabla t=
(s\mu)^{\rho}(t) h(t)
\bigg|_{a}^{b}-\int\limits_{a}^{b}
\frac{\nabla}{\nabla t}(s\mu)^{\rho}(t) h^{\rho}(t)\nabla t,
$$
we obtain
$$
\int\limits_{a}^{b}\left[s^{\rho}(t)h^{\rho}(t)
+r(t)h^{\rho}(t)- h^{\rho}(t)
\frac{\nabla}{\nabla t}(s\mu )^{\rho}(t)\right]\nabla t = 0
$$
and then
$$
\int\limits_{a}^{b}\left[s^{\rho}(t)
+ r(t)- \frac{\nabla}{\nabla t}(s\mu)^{\rho}(t)\right]h^{\rho}(t)\nabla t = 0.
$$
From the fundamental lemma of the nabla calculus of variations
(cf. Lemma~15 of \cite{MR2671876}), we get the Euler--Lagrange equation
$$
s^{\rho}(t)+ r(t)- \frac{\nabla}{\nabla t}(s\mu)^{\rho}(t) = 0
$$
and therefore \eqref{eq 2 nabla} holds.
\end{proof}

\begin{corollary}[Euler--Lagrange equation (3.17) of \cite{CastilloPedregal}]
\label{cor in R}
Let $a, b \in \mathbb{R}$ with $a < b$.
If $y$ is solution to problem
\begin{gather*}
\mathcal{L}[y]=H\left(\int\limits_{a}^{b}f_{1}(t,y(t),y'(t)) dt,
\int\limits_{a}^{b} f_{2}(t,y(t),y'(t))dt\right)\longrightarrow \text{extr}\\
y(a)=y_{a}, \quad y(b)=y_{b},
\end{gather*}
then the following differential equation holds:
\begin{multline}
\label{E-L continuous}
H_{1}^{'}(F_1,F_2) \cdot \left(
\frac{\partial f_{1}}{\partial y}(t,y(t),y'(t))-\frac{d}{d t}
\frac{\partial f_{1}}{\partial y'}(t,y(t),y'(t))\right)\\
+ H'_{2}(F_1,F_2) \cdot \left(
\frac{\partial f_{2}}{\partial y}(t,y(t),y'(t))-\frac{d}{d t}
\frac{\partial f_{2}}{\partial y'}(t,y(t),y'(t))\right)=0
\end{multline}
for all $t \in [a,b]$, where
$$
F_i = \int\limits_{a}^{b} f_{i}(t,y(t),y'(t)) dt, \quad i = 1, 2.
$$
\end{corollary}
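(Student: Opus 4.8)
The plan is to obtain Corollary~\ref{cor in R} as the direct specialization of Theorem~\ref{main} to the time scale $\mathbb{\tilde{T}}=\mathbb{R}$ with $k=n=1$ (so that $H:\mathbb{R}^{2}\to\mathbb{R}$). First I would record the elementary identities that hold on $\mathbb{R}$: every point is both left- and right-dense, so $\sigma(t)=\rho(t)=t$ and $\mu(t)=\nu(t)=0$ for all $t$; consequently $\hat{y}^{\sigma}=\hat{y}^{\rho}=\hat{y}$ and, by Theorems~\ref{rozniczka delta} and~\ref{rozniczka nabla} together with Theorem~\ref{polaczenie rozniczka delta i nabla}, the delta and nabla derivatives both collapse to the ordinary derivative, $\hat{y}^{\Delta}=\hat{y}^{\nabla}=\hat{y}'$, and $\frac{\Delta}{\Delta t}=\frac{\nabla}{\nabla t}=\frac{d}{dt}$. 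I would also note that on $\mathbb{R}$ both the delta and the nabla integral coincide with the Riemann integral, so the two inner functionals of \eqref{problem} reduce to $F_{1}$ and $F_{2}$ and the class $\mathcal{C}^{1}$ reduces to the classical $C^{1}([a,b])$; this reconciles the hypotheses of the corollary with those of the theorem.

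With these substitutions one has $[\hat{y}](t)=(t,\hat{y}(t),\hat{y}'(t))=\{\hat{y}\}(t)$, while $f_{iy}$ and $f_{iv}$ become the classical partials $\partial f_{i}/\partial y$ and $\partial f_{i}/\partial y'$. Next I would feed these identities into the first Euler--Lagrange equation \eqref{eq 1 delta}. The decisive simplification is that the entire third term of \eqref{eq 1 delta} carries a factor $\nu(t)$, which is identically zero on $\mathbb{R}$, so the whole bracketed $\frac{\Delta}{\Delta t}[\cdots]^{\sigma}$ expression vanishes. The two remaining sums, each with a single index ($i=1$ and $i=k+1=2$), then read
\begin{equation*}
H_{1}'\cdot\left(\frac{\partial f_{1}}{\partial y}-\frac{d}{dt}\frac{\partial f_{1}}{\partial y'}\right)
+H_{2}'\cdot\left(\frac{\partial f_{2}}{\partial y}(\sigma(t))-\frac{d}{dt}\frac{\partial f_{2}}{\partial y'}\right)=0,
\end{equation*}
and since $\sigma(t)=t$ the argument shift in the second term disappears, yielding exactly \eqref{E-L continuous}.

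As a consistency check I would repeat the substitution in the second Euler--Lagrange equation \eqref{eq 2 nabla}: there the graininess factor is $\mu(t)$, again identically zero on $\mathbb{R}$, the $\rho$-shift in the first sum becomes trivial, and one recovers the very same equation \eqref{E-L continuous}. This confirms that on a continuous time scale the two necessary conditions of Theorem~\ref{main} are not independent but degenerate to the single classical condition of \cite{CastilloPedregal}.

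I do not expect a genuine obstacle here: the argument is a bookkeeping specialization rather than a new proof. The only point requiring a little care is the interpretation of the formal symbols $\frac{\Delta}{\Delta t}f_{iv}$ --- as the Remark preceding the theorem stresses, these are shorthand whose rigorous meaning is fixed in integral form, not literal derivatives of a composite expression. On $\mathbb{R}$, however, the underlying functions are genuinely $C^{1}$ by assumption~(2) on the $f_{i}$, so these formal derivatives may be read as honest total derivatives $\frac{d}{dt}\frac{\partial f_{i}}{\partial y'}(t,y(t),y'(t))$, and the passage to \eqref{E-L continuous} is then unambiguous.
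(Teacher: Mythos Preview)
Your proposal is correct and follows exactly the same approach as the paper: specialize Theorem~\ref{main} to $\tilde{\mathbb{T}}=\mathbb{R}$ with $k=n=1$, whereupon $\sigma=\rho=\mathrm{id}$, $\mu=\nu\equiv 0$, and the delta/nabla operators collapse to ordinary derivatives, killing the graininess terms and reducing both \eqref{eq 1 delta} and \eqref{eq 2 nabla} to \eqref{E-L continuous}. The paper's own proof is the one-line statement ``Let $\tilde{\mathbb{T}}=\mathbb{R}$ and $k=n=1$. The result follows from Theorem~\ref{main}'', so you have simply (and correctly) unpacked the details the authors left implicit.
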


\begin{proof}
Let $\mathbb{\tilde{T}}=\mathbb{R}$ and $k=n=1$.
The result follows from Theorem~\ref{main}.
\end{proof}

Let $\varphi : \mathbb{R}^3 \rightarrow \mathbb{R}$.
In what follows we use $\partial_i \varphi$, $i \in \{1, 2, 3\}$,
to denote the partial derivative of $\varphi$
with respect to its $i$th argument.

\begin{corollary}
\label{cor in Z}
Let $a, b \in \mathbb{N}$ with $b - a > 1$
and denote by $\Delta y(t)$ and $\nabla y(t)$ the standard
forward and backward difference operators, that is,
$\Delta y(t) := y(t+1) - y(t)$
and $\nabla y(t) := y(t) - y(t-1)$.
If $y$ is solution to problem
\begin{gather*}
\mathcal{L}[y]=H\left(\sum\limits_{t=a}^{b-1} f(t,y(t+1),\Delta y(t)),
\sum\limits_{t=a+1}^{b} g(t,y(t-1),\nabla y(t))\right)\longrightarrow \text{extr}\\
y(a)=y_{a}, \quad y(b)=y_{b},
\end{gather*}
then both Euler--Lagrange difference equations
\begin{multline}
\label{eq 1 delta Z}
H_{1}^{'}(F,G) \cdot \left[
\partial_2 f(t,y(t+1),\Delta y(t)) -\Delta
\partial_3 f(t,y(t+1),\Delta y(t))\right]\\
+ H'_{2}(F,G) \cdot \left[
\partial_2 g(t+1,y(t),\nabla y(t+1))-\Delta
\partial_3 g(t,y(t-1),\nabla y(t))\right]\\
+H'_{2}(F,G) \cdot \Delta \left[
\partial_2 g(t+1,y(t),\nabla y(t+1))
-\nabla \partial_3 g(t+1,y(t),\nabla y(t+1))\right] = 0
\end{multline}
and
\begin{multline}
\label{eq 2 nabla Z}
H_{1}^{'}(F,G) \cdot \left[
\partial_2 f(t-1,y(t),\Delta y(t-1))-\nabla
\partial_3 f(t,y(t+1),\Delta y(t))\right]\\
- H_{1}^{'}(F,G) \cdot \nabla \left[
\partial_2 f(t-1,y(t),\Delta y(t-1))
-\Delta \partial_3 f(t-1,y(t),\Delta y(t-1))\right]\\
+ H_{2}^{'}(F,G) \cdot \left[
\partial_2 g(t,y(t-1),\nabla y(t))-\nabla
\partial_3 g(t,y(t-1),\nabla y(t))\right] = 0
\end{multline}
hold for $t \in \{a+1, \ldots, b-1\}$, where
$$
F := \sum\limits_{t=a}^{b-1} f(t,y(t+1),\Delta y(t)), \quad
G := \sum\limits_{t=a+1}^{b} g(t,y(t-1),\nabla y(t)).
$$
\end{corollary}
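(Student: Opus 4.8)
The plan is to obtain the two difference equations as the specialization of Theorem~\ref{main} to the time scale $\tilde{\mathbb{T}} = \mathbb{Z}$ with $k = n = 1$. First I would record the relevant structure of $\mathbb{Z}$: here $\sigma(t) = t + 1$, $\rho(t) = t - 1$, and the graininess functions are constant, $\mu(t) = \nu(t) \equiv 1$. By Theorems~\ref{rozniczka delta} and~\ref{rozniczka nabla} the delta and nabla derivatives become the forward and backward differences, $y^{\Delta}(t) = \Delta y(t)$ and $y^{\nabla}(t) = \nabla y(t)$; the shift operators $(\cdot)^{\sigma}$ and $(\cdot)^{\rho}$ become the index shifts $t \mapsto t+1$ and $t \mapsto t-1$; and the delta and nabla integrals collapse to the finite sums $\int_a^b (\cdot)\,\Delta t = \sum_{t=a}^{b-1}(\cdot)$ and $\int_a^b (\cdot)\,\nabla t = \sum_{t=a+1}^{b}(\cdot)$. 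With $f_1 = f$ and $f_2 = g$, this identifies $\mathcal{F}_1[\hat{y}] = F$ and $\mathcal{F}_2[\hat{y}] = G$, so that $H_1' = H_1'(F,G)$ and $H_2' = H_2'(F,G)$ are constants independent of $t$, and it reads off $f_{1y} = \partial_2 f$, $f_{1v} = \partial_3 f$, $f_{2y} = \partial_2 g$, $f_{2v} = \partial_3 g$. Finally, the interior set is $\mathbb{T}_\kappa^\kappa = \{a+1, \ldots, b-1\}$, which is precisely the range of $t$ in the statement.

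Next I would translate the compact evaluation symbols of Theorem~\ref{main}. On $\mathbb{Z}$ one has $[\hat{y}](t) = (t, y(t+1), \Delta y(t))$ and $\{\hat{y}\}(t) = (t, y(t-1), \nabla y(t))$. The shifted evaluations appearing in \eqref{eq 1 delta} and \eqref{eq 2 nabla} unwind to $[\hat{y}](\rho(t)) = (t-1, y(t), \Delta y(t-1))$ and $\{\hat{y}\}(\sigma(t)) = (t+1, y(t), \nabla y(t+1))$, using $y^{\sigma}(\rho(t)) = y(t)$, $y^{\Delta}(\rho(t)) = \Delta y(t-1)$, $y^{\rho}(\sigma(t)) = y(t)$, and $y^{\nabla}(\sigma(t)) = \nabla y(t+1)$. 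Substituting these, together with $\mu = \nu = 1$ and the fact that the constants $H_1'$, $H_2'$ commute with the difference operators, into \eqref{eq 1 delta} yields \eqref{eq 1 delta Z}, and into \eqref{eq 2 nabla} yields \eqref{eq 2 nabla Z}.

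The one place requiring genuine care, and the step I expect to be the main obstacle, is the bookkeeping of the composed shift operators applied to already-shifted arguments, specifically the bracketed term $\left[\cdots\right]^{\sigma}(t)$ in \eqref{eq 1 delta} and the term $\left[\cdots\right]^{\rho}(t)$ in \eqref{eq 2 nabla}. For instance, to evaluate the inner bracket of \eqref{eq 1 delta} I must shift both the plain term $f_{2y}\{\hat{y}\}(t)$ and the nabla-difference term $\frac{\nabla}{\nabla t} f_{2v}\{\hat{y}\}(t)$ by $t \mapsto t+1$; the latter requires checking that $\left[\nabla \partial_3 g(t, y(t-1), \nabla y(t))\right]^{\sigma} = \nabla \partial_3 g(t+1, y(t), \nabla y(t+1))$, i.e.\ that shifting a backward difference lands on the correct argument slots. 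An analogous verification handles the $\rho$-shift of the delta-difference term in \eqref{eq 2 nabla}. This indexing is where sign and off-by-one errors are most likely, so I would track each of the three argument positions explicitly; once these shifts are confirmed, the remaining manipulations are purely mechanical substitutions, and the two equations follow directly from Theorem~\ref{main}.
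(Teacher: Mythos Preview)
Your proposal is correct and follows exactly the paper's approach: the paper's proof is the single line ``The result is a direct consequence of Theorem~\ref{main} with $\tilde{\mathbb{T}}=\mathbb{Z}$ and $k=n=1$,'' and you have simply unpacked the bookkeeping that this specialization entails. Your explicit tracking of the shifts $[\hat{y}](\rho(t))$, $\{\hat{y}\}(\sigma(t))$ and of the outer $(\cdot)^\sigma$, $(\cdot)^\rho$ operators is accurate and is precisely what the one-line proof leaves implicit.
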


\begin{proof}
The result is a direct consequence of Theorem~\ref{main}
with $\mathbb{\tilde{T}}=\mathbb{Z}$ and $k=n=1$.
\end{proof}


\section{Application to Economics}
\label{sec:ApplEconomics}

In this section we introduce an economic problem
that is considered in continuous (Example~\ref{ex:1})
and discrete (Example~\ref{ex:2}) cases.
We consider a firm that wants to program
its production and investment policies in order to gain a desirable production
level and maximize its market competitiveness. Our idea is to discretize
necessary optimality conditions of Euler--Lagrange type ($EL_P$)
and the (continuous) problem $P$ in different ways,
combining forward ($\Delta$) and backward ($\nabla$) discretization operators into a mixed operator $D$.
One can apply the variational principle to problem $P$ obtaining the respective
Euler--Lagrange equation $EL_P$ (Corollary~\ref{cor in R}), and then discretize it using $D$,
obtaining $(EL_P)_D$; or we can begin by discretizing problem $P$ into $P_D$ and then
develop the respective variational principle, obtaining $EL_{P_D}$ (Theorem~\ref{main}).
This is illustrated in Figure~\ref{diagram}.
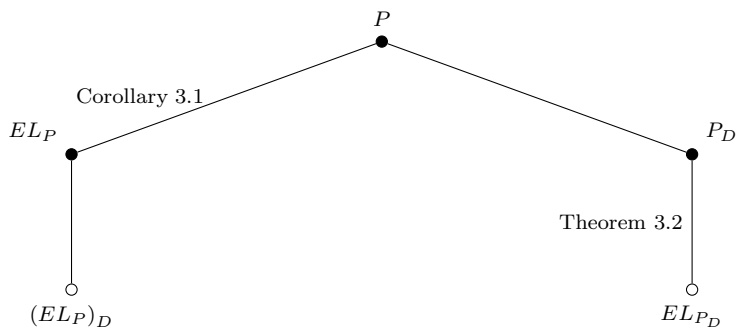
\begin{figure}[htbp]
\begin{center}
\begin{tikzpicture}[scale=1.5,font=\footnotesize]
\tikzstyle{solid node}=[circle,draw,inner sep=1.5,fill=black]
\tikzstyle{hollow node}=[circle,draw,inner sep=1.5]
\tikzstyle{level 1}=[level distance=10mm,sibling distance=5.5cm]
\tikzstyle{level 2}=[level distance=12mm,sibling distance=1.5cm]
\tikzstyle{level 3}=[level distance=12mm,sibling distance=2cm]
\node(0)[solid node,label=above:{$P$}]{}
child{node[solid node,label=above left:{$EL_P$}]{}
child{node[hollow node,label=below:{$\left(EL_P\right)_{D}$}]{} edge from parent node[left]{}}
edge from parent node[left,xshift=-5]{Corollary~\ref{cor in R}}
}
child{node[solid node,label=above right:{$P_D$}]{}
child{node[hollow node,label=below:{$EL_{P_D}$}]{} edge from parent node[left]{Theorem~\ref{main}}}
edge from parent node[right,xshift=5]{}
};
\end{tikzpicture}
\end{center}
\caption{\label{diagram} Diagram illustrating different
discretizations for a variational problem $P$.}
\end{figure}
Note that, in general, $\left(EL_P\right)_{D}$ is different from $EL_{P_D}$.
Four different problems $P_D$, four Euler--Lagrange equations $EL_{P_D}$
and four Euler--Lagrange equations $(EL_{P})_D$
are discussed and investigated. In what follows,
$$
\Delta y(t) := y^{\sigma}(t) - y(t),
\quad
\nabla y(t) := y(t) - y^{\rho}(t).
$$
In particular, if $\mathbb{T}$ has a maximum $M$, then
$\Delta y(M) = 0$; if $\mathbb{T}$ has a minimum $m$, then
$\nabla y(m) = 0$.


\subsection{Direct discretizations of the continuous Euler--Lagrange equation}
\label{sec:AP:1}

The next example is borrowed from Section~6 of \cite{CastilloPedregal}.

\begin{ex}[A continuous problem of the calculus of variations \cite{CastilloPedregal}]
\label{ex:1}
We consider a firm trying to program its production and investment policies in order
to reach a desirable production level and to maximize its future market competitiveness
at time horizon $T$. The firm competitiveness is measured by the function $f(k(T),a(T))$,
which depends on the accumulated capital $k(T)$ (accumulated goods devoted to production)
and accumulated technology $a(T)$ (capability given by the practical application
of knowledge and experience), both at time horizon $T$. We assume that the function
measuring the firm market competitiveness is the product of the accumulated capital
with the accumulated technology, that is,
\begin{equation}
\label{eq:two_components}
f(k(T),a(T))=k(T)^{\gamma_{1}}a(T)^{\gamma_{2}},
\end{equation}
where $\gamma_{1}$ and $\gamma_{2}$ are constants measuring the absolute and relative importance/influence
of capital and technology competitiveness, respectively. More precisely, the firm may decide to sell
products at a small or no benefit, or even losses, if due to this decision the firm can gain experience
and get technology acquisition. Firm's measure of competitiveness is chosen as the product
of two components because there is a strict dependence between capital
and technology. Indeed, one affects the other, and huge differences between them cannot be allowed.
This means that a lack of one of those components must be compensated by a large amount of the other
in order to reach the same competitiveness level.
The firm starts operating at time $t=0$ and accumulates capital over time as
$$
k(T)=\int\limits_{0}^{T} e^{-\rho(T-t)}\left[y(t)p(t)-c(y(t),y'(t)) \right] dt,
$$
where $\rho$ is the discount rate, $p(t)$ is the unit product price at time $t$,
and $y(t)$ is the actual production rate at time $t$. The accumulated capital
depends also on function $c\left(y(t),y'(t)\right)$, which is
the cost of producing $y(t)$ units of product at time $t$ plus technology increases.
In our model the cost function has the following form:
$$
c(y(t),y'(t))=c_{0}+c_{1}y(t)+c_{2}y'^{2}(t),
$$
where $c_{0}$, $c_{1}$, $c_{2}$ are positive constants.
The second component of \eqref{eq:two_components}
is the accumulated technology, which is the discounted integral
of the technology acquisition rate over time:
$$
a(T)=\int\limits_{0}^{T} e^{-\rho(T-t)}g(y(t),y'(t))dt
$$
with
$$
g(y(t),y'(t))=\lambda y(t)+\beta \sqrt{y'(t)+b},
$$
where $\lambda$, $\beta$ and $b$ are positive constants.
Function $g$ describes the acquisition technology rate at time $t$.
It depends on the actual sales rate $y(t)$ (equal to the actual production
rate at the same time) and $y'(t)$, the actual production rate change. The $y(t)$ argument accounts
for machines, other technology components, gained experience, etc., while the $y'(t)$ argument accounts
for technology due to changes on sales rate. This means that large positive or negative changes on sales
rate $y'(t)$ forces the firm to make decisions about technology increases: it may be a starting
point for the increase of production or a warning to avoid decrease.
All constants used in the model are positive and have a precise interpretation.
It is also worth to emphasise that both the production cost function $c$ and the acquisition technology
function $g$ depend on argument $y'$. However, in the first function, argument $y'$ is
of higher-order than in the latter, motivated by the fact that incorporation of technology
into a production process is very difficult, generates costs, and requires time to be checked.
The sales relationship is given by
$$
h(y(t),p(t))=(y(t)-y_{0})(p(t)-p_{0})-B=0.
$$
It has this hyperbolic form in order to express the assumption
that sales increase when the unit price decrease. Moreover,
it also gives the lower limit for the sales $(y_{0})$
and the lower limit for the unit price $(p_{0})$.
There is an upper bound $b$ for the size of production
rate change so that $|y'|\leq b$.
The economic problem under consideration is
$$
\max\limits_{y} f(k(T),a(T)) = \min\limits_{y} \left[ -k(T)a(T)\right]
= \min\limits_{y} K(T)a(T),
$$
where $K(T)=-k(T)$. More precisely, we consider $\gamma_{1}=\gamma_{2}=1$,
and the problem takes the form
$$
\min\limits_{y} \left(
\int\limits_{0}^{T} e^{-\rho(T-t)}\left[c(y(t),y'(t))-y(t)p(t)\right]dt
\right)
\left(
\int\limits_{0}^{T} e^{-\rho(T-t)}g(y(t),y'(t))dt
\right)
$$
subject to given boundary conditions
$$
y(0)=y_{0}, \quad y(T)=y_{T},
$$
where $y_{0}$ is the initial sales rate and $y_{T}$
is the target sales rate at time $t=T$.
This problem is denoted in the sequel by $(P)$.
Note that here $\rho$ is the discount rate (not to be confused with the backward jump operator $\rho(t)$
of time scales, which in the discrete case, to be considered in this section, is always expressed by the index $k-1$).
For problem $(P)$ the Euler--Lagrange differential equation \eqref{E-L continuous} takes the form
\begin{multline}
\label{eq:contELeq}
a(T) \, e^{-\rho(T-t)}\left[c_{1}-p(t)-2c_{2}(\rho y'(t)+y''(t))\right]\\
+K(T) \, e^{-\rho(T-t)}\left[\lambda-\frac{\beta}{2}\left(\frac{\rho}{\sqrt{y'(t)+b}}
-\frac{y''(t)}{2\sqrt{(y'(t)+b)^{3}}}\right)\right]=0.
\end{multline}
The solution of the continuous problem $(P)$ is found by solving
the Euler--Lagrange equation \eqref{eq:contELeq}. It turns out that this is a highly nonlinear
differential equation of second order, for which no analytical solution is known.
In other words, to solve the continuous problem one needs to apply a suitable discretization.
This is exactly one of the main motivations of our study: to provide an appropriate theory of discretization.
\end{ex}

A discretization can always be done in two different ways:
using the delta or the nabla approach. In the next example
we consider four different discretizations for the problem $(P)$
of Example~\ref{ex:1} and the corresponding four discretizations
of the Euler--Lagrange equation \eqref{eq:contELeq}.

\begin{ex}
\label{ex:2}
Consider a firm that wants to program its production and investment policies
to reach a given production rate $k(T)$, $T \in \mathbb{N}$,
and to maximize its future market competitiveness
at time horizon $T$. Economic models, leading to the maximization
of a variational functional, are presented below
and are based on the following assumptions:
\begin{enumerate}

\item
The firm competitiveness is measured by the function $f(k(T),a(T))$,
which depends on the accumulated capital $k(T)$ and on the accumulated technology $a(T)$
both at time horizon $T$. Here, the function to measure
the firm market competitiveness is assumed to be of form
\begin{equation}
\label{function}
f(k(T),a(T))=k(T)^{\gamma_{1}}a(T)^{\gamma_{2}}
\end{equation}
with $\gamma_{1}$ and $\gamma_{2}$ given constants that measure the absolute
and relative importance of capital and technology competitiveness, respectively.

\item
The acquisition technology rate is given by function
$g\left(y(t_{k+1}),\Delta y(t_{k})\right)$ (delta version) or
$g\left(y(t_{k-1}),\nabla y(t_{k})\right)$ (nabla version),
where $y(s)$ is the sales rate at time $s$,
which we assume equal to the actual production rate at the same point of time,
that is, $\Delta y(t_{k})$ (delta version) or $\nabla y(t_{k})$ (nabla version)
are the actual production rate change at time $t_{k}$.

\item
The firm starts operating at point $t_{0}=0$ and accumulates capital as
\begin{equation}
\label{capital delta}
K_{\Delta}(T)=
\sum\limits_{t_{k}=0}^{T-1}
(1+\rho)^{t_{k}-T}\left(c_{0}+c_{1}y_{k+1}+c_{2}\left(\Delta {y}_{k}\right)^{2}-y_{k+1}p_{k+1}\right)
\end{equation}
(delta version) or
\begin{equation}
\label{capital nabla}
K_{\nabla}(T)=
\sum\limits_{t_{k}=1}^{T}
(1-\rho)^{T-t_{k}}\left(c_{0}+c_{1}y_{k-1}+c_{2}\left(\nabla {y}_{k}\right)^{2}-y_{k-1}p_{k-1}\right)
\end{equation}
(nabla version), where $\rho$ is the discount rate,
$p_{k}=p(t_{k})$ is the unit product price,
$y_{k}=y(t_{k})$ is the sales rate at time $t_{k}$, and
$c(y_{k+1},\Delta y_{k})$ (delta) or $c(y_{k-1},\nabla y_{k})$ (nabla)
is the cost of producing $y_{k+1}$ (delta) or $y_{k-1}$ (nabla) units of product
at time $t_{k+1}$ (delta) or $t_{k-1}$ (nabla) plus technology increases.

\item
The accumulate technology is given by
\begin{equation}
\label{technology delta}
a_{\Delta}(T)=
\sum\limits_{t_{k}=0}^{T-1}
(1+\rho)^{t_{k}-T}
\left(\lambda y_{k+1}+\beta\sqrt{\Delta {y}_{k}+b}\right)
\end{equation}
(delta version) or
\begin{equation}
\label{technology}
a_{\nabla}(T)=
\sum\limits_{t_{k}=1}^{T}
(1-\rho)^{T-t_{k}}
\left(\lambda y_{k-1}+\beta\sqrt{\nabla {y}_{k}+b}\right)
\end{equation}
(nabla version).

\item
The price-sales relationship regulating the market is given by the equation
\begin{equation}
\label{price-sales delta}
h(y_{k+1},p_{k+1})=(y_{k+1}-y_{0})(p_{k+1}-p_{0})-B=0
\end{equation}
(delta version) or by the equation
\begin{equation}
\label{price-sales nabla}
h(y_{k-1},p_{k-1})=(y_{k-1}-y_{0})(p_{k-1}-p_{0})-B=0
\end{equation}
(nabla version).
There is an upper bound $b$ for the size of production rate change,
so that $\left|\Delta y_{k}\right|\leq b$ (delta) or
$\left|\nabla y_{k}\right|\leq b$ (nabla).

\item
Two boundary conditions are given:
\begin{equation}
\label{conditions}
y(0)= y_0,\quad y(T)= y_T,
\end{equation}
which are the initial sales rate at point $t_{0}=0$
and the target sales rate at the terminal point of time $t_{k}=T$.
\end{enumerate}
Then, the firm problem is stated as follows:
\begin{equation*}
\max\limits_{y_{k}}k(T)^{\gamma_{1}}a(T)^{\gamma_{2}}
\end{equation*}
subject to the hypotheses \eqref{function}--\eqref{conditions}.
For illustrative purposes, and to be coherent with Example~\ref{ex:1}
borrowed from \cite{CastilloPedregal},
we assume $\gamma_{1}=\gamma_{2}=1$
and we transform the maximization problem into
an equivalent minimization process:
\begin{equation*}
\min\limits_{y_{k}}(-k(T))a(T)=\min\limits_{y_{k}}K(T)a(T).
\end{equation*}
Each component of the objective functional $f(K(T),a(T))$ may be discretized
in two ways (using the delta or the nabla approach).
Due to this reason, we obtain four different
discrete problems of the calculus of variations:
\begin{enumerate}

\item Problem $(P_{\Delta\nabla})$ with cost functional
$\min\limits_{y_{k}} K_{\Delta}(T)a_{\nabla}(T)$;

\item Problem $(P_{\nabla\Delta})$ with cost functional
$\min\limits_{y_{k}} K_{\nabla}(T)a_{\Delta}(T)$;

\item Problem $(P_{\Delta\Delta})$ with cost functional
$\min\limits_{y_{k}} K_{\Delta}(T)a_{\Delta}(T)$;

\item Problem $(P_{\nabla\nabla})$ with cost functional
$\min\limits_{y_{k}} K_{\nabla}(T)a_{\nabla}(T)$;
\end{enumerate}
where $K_{\mathcal{D}}(T)$ and  $a_{\mathcal{D}}(T)$, $\mathcal{D}\in\left\{\Delta,\nabla\right\}$,
are defined as in \eqref{capital delta}--\eqref{technology}.
With the notation of Section~\ref{sec:mainResults},
such functionals consist of the following integrands:
\begin{equation*}
\begin{split}
f_{1\Delta}&=(1+\rho)^{t_{k}-T}(c_{0}+c_{1}y_{k+1}+c_{2}\left(\Delta {y}_{k}\right)^{2}-y_{k+1}p_{k+1}),\\
f_{1\nabla}&=(1-\rho)^{T-t_{k}}(c_{0}+c_{1}y_{k-1}+c_{2}\left(\nabla {y}_{k}\right)^{2}-y_{k-1}p_{k-1}),\\
f_{2\Delta}&= (1+\rho)^{t_{k}-T}
\left(\lambda y_{k+1}+\beta\sqrt{\Delta {y}_{k}+b}\right),\\
f_{2\nabla}&=(1-\rho)^{T-t_{k}}
\left(\lambda y_{k-1}+\beta\sqrt{\nabla {y}_{k}+b}\right),
\end{split}
\end{equation*}
where
$f_{i\Delta}=f_{i\Delta}(t_{k}, y_{k+1}, \Delta y_{k})$,
$f_{i\nabla}=f_{i\nabla}(t_{k}, y_{k-1}, \nabla y_{k})$,
$i=1,2$, and
function $f_{1\mathcal{D}}$ is associated with functional $K_{\mathcal{D}}(T)$
and function $f_{2\mathcal{D}}$ is associated with
functional $a_{\mathcal{D}}(T)$, $\mathcal{D} \in \left\{\Delta,\nabla\right\}$.
Using the same discretization as the one from $(P)$ to $(P_{\Delta\nabla})$,
the Euler--Lagrange equation \eqref{E-L continuous} is discretized into
\begin{equation}
\label{E-L delta nabla:g}
a_{\nabla}(T)\cdot\left(\frac{\partial f_{1\Delta}}{\partial y_{k+1}}
-\Delta\frac{\partial f_{1\Delta}}{\partial \Delta y_{k}}\right)
+K_{\Delta}(T)\cdot\left(\frac{\partial f_{2\nabla}}{\partial y_{k-1}}
-\nabla\frac{\partial f_{2\nabla}}{\partial \nabla y_{k}}\right)=0,
\end{equation}
which for our economic problem $(P)$ takes the form
\begin{multline}
\label{eq:EL:P:after7.17}
\tag{$(EL_P)_{\Delta\nabla}$}
a_{\nabla}(T)(1+\rho)^{t_{k}-T}\left[
c_{1}-p_{0}+\frac{By_{0}}{(y_{k+1}-y_{0})^{2}}
-2c_{2}\left(\rho\Delta y_{k}+(1+\rho)\Delta^{2} y_{k}\right)\right]\\
+ K_{\Delta}(T)(1-\rho)^{T-t_{k}}\left[
\lambda - \frac{
\beta \left(
\rho\sqrt{\nabla {y}_{k}+b}-\nabla\sqrt{\nabla {y}_{k}+b}\right)}
{2\sqrt{\nabla {y}_{k}+b}\sqrt{\nabla {y}_{k-1}+b}}\right] = 0,
\end{multline}
valid for $t_{k}\in\mathbb{T}_{\kappa}^{\kappa}$. Note that
we start with a given value of sales (or production) rate $y_0$ that the firm wants to improve (increase)
in order to generate a profit. For this reason, the next values $y_k$, $k > 0$, are assumed
to be greater than the initial value $y_0$. This economic assumption, makes valid
the Euler--Lagrange equation \eqref{eq:EL:P:after7.17}.
Indeed, it is known a priori, from economic insight,
that $y(t)$ is an increasing function \cite{CastilloPedregal}.
Similarly, the discretization from $(P)$ into
$(P_{\nabla\Delta})$ gives the discretized Euler--Lagrange equation
\begin{equation}
\label{E-L nabla delta:g}
a_{\Delta}(T)\cdot\left(\frac{\partial f_{1\nabla}}{\partial y_{k-1}}
-\nabla\frac{\partial f_{1\nabla}}{\partial \nabla y_{k}}\right)
+K_{\nabla}(T)\cdot\left(\frac{\partial f_{2\Delta}}{\partial y_{k+1}}
-\Delta\frac{\partial f_{2\Delta}}{\partial \Delta y_{k}}\right)=0
\end{equation}
that, for our example, reads
\begin{multline}
\label{eq:EL:P:after7.18}
\tag{$(EL_P)_{\nabla\Delta}$}
a_{\Delta}(T)(1-\rho)^{T-t_{k}}\left[
c_{1}-p_{0}+\frac{By_{0}}{(y_{k-1}-y_{0})^{2}}
-2c_{2}\left(\rho\nabla y_{k}+(1-\rho)\nabla^{2} y_{k}\right)
\right]\\
+ K_{\nabla}(T)(1+\rho)^{t_{k}-T}\left[
\lambda - \frac{
\beta \left(
\rho\sqrt{\Delta {y}_{k}+b}-\Delta\sqrt{\Delta {y}_{k}+b}\right)}
{2\sqrt{\Delta {y}_{k}+b}\sqrt{\Delta {y}_{k+1}+b}}\right]  = 0,
\end{multline}
$t_{k}\in\mathbb{T}_{\kappa}^{\kappa}$;
the discretization from $(P)$ into $(P_{\Delta\Delta})$
leads to the discretized Euler--Lagrange equation
\begin{equation}
\label{E-L delta delta:g}
a_{\Delta}(T)\cdot\left(\frac{\partial f_{1\Delta}}{\partial y_{k+1}}
-\Delta\frac{\partial f_{1\Delta}}{\partial \Delta y_{k}}\right)
+K_{\Delta}(T)\cdot\left(\frac{\partial f_{2\Delta}}{\partial y_{k+1}}
-\Delta\frac{\partial f_{2\Delta}}{\partial \Delta y_{k}}\right)=0
\end{equation}
and to
\begin{multline}
\label{eq:EL:P:after7.19}
\tag{$(EL_P)_{\Delta\Delta}$}
a_{\Delta}(T)(1+\rho)^{t_{k}-T}\left[
c_{1}-p_{0}+\frac{By_{0}}{(y_{k+1}-y_{0})^{2}}
-2c_{2}\left(\rho\Delta y_{k}+(1+\rho)\Delta^{2} y_{k}\right)
\right]\\
+K_{\Delta}(T)(1+\rho)^{t_{k}-T}\left[
\lambda - \frac{
\beta \left(
\rho\sqrt{\Delta {y}_{k}+b}-\Delta\sqrt{\Delta {y}_{k}+b}\right)}
{2\sqrt{\Delta {y}_{k}+b}\sqrt{\Delta {y}_{k+1}+b}}\right] = 0,
\end{multline}
$t_{k}\in\mathbb{T}^{\kappa^2}$; while the discretization from $(P)$ into
problem $(P_{\nabla\nabla})$ gives
\begin{equation}
\label{E-L nabla nabla:g}
a_{\nabla}(T)\cdot\left(\frac{\partial f_{1\nabla}}{\partial y_{k-1}}
-\nabla\frac{\partial f_{1\nabla}}{\partial \nabla y_{k}}\right)
+K_{\nabla}(T)\cdot\left(\frac{\partial f_{2\nabla}}{\partial y_{k-1}}
-\nabla\frac{\partial f_{2\nabla}}{\partial \nabla y_{k}}\right)=0
\end{equation}
that reduces in our case to
\begin{multline}
\label{eq:EL:P:after7.20}
\tag{$(EL_P)_{\nabla\nabla}$}
a_{\nabla}(T)(1-\rho)^{T-t_{k}}\left[
c_{1}-p_{0}+\frac{By_{0}}{(y_{k-1}-y_{0})^{2}}
-2c_{2}\left(\rho\nabla y_{k}+(1-\rho)\nabla^{2} y_{k}\right)\right]\\
+ K_{\nabla}(T)(1-\rho)^{T-t_{k}}\left[
\lambda  - \frac{
\beta \left(
\rho\sqrt{\nabla {y}_{k}+b}-\nabla\sqrt{\nabla {y}_{k}+b}\right)}
{2\sqrt{\nabla {y}_{k}+b}\sqrt{\nabla {y}_{k-1}+b}}\right] = 0,
\end{multline}
valid for $t_{k}\in\mathbb{T}_{\kappa^2}$.
As can be easily noticed, all the four discretizations of
the continuous Euler--Lagrange equation \eqref{E-L continuous} are different
but consist of the same items. For this reason, we define:
$$
\gamma_{1\Delta} := \left(\frac{\partial f_{1\Delta}}{\partial y_{k+1}}
-\Delta\frac{\partial f_{1\Delta}}{\partial \Delta y_{k}}\right),
\quad
\gamma_{1\nabla} := \left(\frac{\partial f_{1\nabla}}{\partial y_{k-1}}
-\nabla\frac{\partial f_{1\nabla}}{\partial \nabla y_{k}}\right),
$$
$$
\gamma_{2\Delta} := \left(\frac{\partial f_{2\Delta}}{\partial y_{k+1}}
-\Delta\frac{\partial f_{2\Delta}}{\partial \Delta y_{k}}\right),
\quad
\gamma_{2\nabla} := \left(\frac{\partial f_{2\nabla}}{\partial y_{k-1}}
-\nabla\frac{\partial f_{2\nabla}}{\partial \nabla y_{k}}\right).
$$
With such notations, the discretizations of the Euler--Lagrange equation
\eqref{E-L continuous} are conveniently written in the following way:
\begin{enumerate}
\item equation \eqref{E-L delta nabla:g} for $(P_{\Delta\nabla})$ is written as
\begin{equation}
\label{eq:EL:DN}
a_{\nabla}(T)\gamma_{1\Delta}+K_{\Delta}(T)\gamma_{2\nabla}=0,
\quad t_{k}\in\mathbb{T}_{\kappa}^{\kappa};
\end{equation}
\item equation \eqref{E-L nabla delta:g} for $(P_{\nabla\Delta})$ is written as
\begin{equation}
\label{eq:EL:ND}
a_{\Delta}(T)\gamma_{1\nabla}+K_{\nabla}(T)\gamma_{2\Delta}=0,
\quad t_{k}\in\mathbb{T}_{\kappa}^{\kappa};
\end{equation}
\item equation \eqref{E-L delta delta:g} for $(P_{\Delta\Delta})$ is written as
\begin{equation}
\label{eq:EL:DD}
a_{\Delta}(T)\gamma_{1\Delta}+K_{\Delta}(T)\gamma_{2\Delta}=0,
\quad t_{k}\in\mathbb{T}^{\kappa^2};
\end{equation}
\item and equation \eqref{E-L nabla nabla:g} for $(P_{\nabla\nabla})$
is equivalently written as
\begin{equation}
\label{eq:EL:NN}
a_{\nabla}(T)\gamma_{1\nabla}+K_{\nabla}(T)\gamma_{2\nabla}=0,
\quad t_{k}\in\mathbb{T}_{\kappa^2}.
\end{equation}
\end{enumerate}
\end{ex}


\subsection{Time-scale Euler--Lagrange equations}
\label{sec:AP:2}

The equation \eqref{eq:EL:DD} for problem $(P_{\Delta\Delta})$
coincides with the time-scale Euler--Lagrange delta equation given by
\cite[Corollary~3.4]{MalinowskaTorresCompositionDelta}
while equation \eqref{eq:EL:NN} for problem $(P_{\nabla\nabla})$
coincides with the time-scale Euler--Lagrange equation
given by \cite[Corollary~3.4]{MalinowskaTorresCompositionNabla}.
From our Corollary~\ref{cor in Z} it follows that such coincidence,
between the direct discretization of the continuous Euler--Lagrange
equation \eqref{E-L continuous} and the discrete Euler--Lagrange equations
\eqref{eq 1 delta Z}--\eqref{eq 2 nabla Z}
obtained from the calculus of variations on time scales,
does not hold for mixed delta-nabla discretizations: neither \eqref{eq:EL:DN}
is a time-scale Euler--Lagrange equation \eqref{eq 1 delta Z} or \eqref{eq 2 nabla Z}
for $(P_{\Delta\nabla})$ nor \eqref{eq:EL:ND} is a time-scale Euler--Lagrange equation
\eqref{eq 1 delta Z} or \eqref{eq 2 nabla Z} for $(P_{\nabla\Delta})$.

For the economic problem $(P_{\Delta\nabla})$ the Euler--Lagrange equations have the following form:
the Euler--Lagrange equation \eqref{eq 1 delta Z} takes the form
\begin{equation}
\label{eq 1 delta ex}
\tag{$EL^1_{P_{\Delta\nabla}}$}
\begin{split}
&
a_{\nabla}(T)\left(1+\rho\right)^{t_{k}-T}
\left[
c_{1}-p_{0}+\frac{By_{0}}{(y_{k+1}-y_{0})^{2}}
-2c_{2}\left(\rho\Delta y_{k}+(1+\rho)\Delta^{2} y_{k}
\right)\right]\\
&
+ K_{\Delta}(T)(1-\rho)^{T-t_{k}}
\left( \lambda  - \frac{
\beta \left(\rho\sqrt{\nabla {y}_{k}+b}
-(1-\rho)\Delta\sqrt{\nabla {y}_{k}+b}\right)}
{2\sqrt{\nabla {y}_{k}+b}\sqrt{\nabla {y}_{k+1}+b}}\right)\\
&
+\Delta\left[
K_{\Delta}(T)(1-\rho)^{T-t_{k}}
\left(
\lambda  - \frac{\beta \left(
\rho\sqrt{\nabla {y}_{k}+b}-\nabla\sqrt{\nabla {y}_{k}+b}
\right)}{2\sqrt{\nabla {y}_{k}+b}\sqrt{\nabla {y}_{k-1}+b}}\right)\right](t_{k+1}) = 0
\end{split}
\end{equation}
for $t_{k}\in\mathbb{T}^{\kappa}_{\kappa}$,
while the Euler--Lagrange equation \eqref{eq 2 nabla Z} gives
\begin{equation}
\label{eq 2 nabla ex}
\tag{$EL^2_{P_{\Delta\nabla}}$}
\begin{split}
&
a_{\nabla}(T)\left(1+\rho\right)^{t_{k-1}-T}
\left[
c_{1}-p_{0}+\frac{By_{0}}{(y_{k}-y_{0})^{2}}
-2c_{2}\left( \rho\Delta y_{k} + \nabla\left(\Delta y_{k}\right)
\right)\right]\\
&
+ K_{\Delta}(T)(1-\rho)^{T-t_{k}}
\left(
\lambda
- \frac{
\beta \left(\rho\sqrt{\nabla {y}_{k1}+b}-\nabla
\sqrt{\nabla {y}_{k}+b}\right)}
{2\sqrt{\nabla {y}_{k}+b}\sqrt{\nabla {y}_{k-1}+b}}\right)\\
&
-\nabla\left[
a_{\nabla}(T)(1+\rho)^{t_{k}-T}
\left(c_{1}-p_{0}+\frac{By_{0}}{(y_{k}-y_{0})^{2}}-2c_{2}
\left(\rho\Delta y_{k}+(1+\rho)\Delta^{2}y_{k}\right)
\right)\right](t_{k-1})  = 0
\end{split}
\end{equation}
for $t_{k}\in\mathbb{T}_{\kappa}^{\kappa}$.
For problem $(P_{\nabla\Delta})$
the Euler--Lagrange equations take the following form:
the Euler--Lagrange equation \eqref{eq 1 delta Z} gives
\begin{equation}
\label{eq 1 delta ex nabladelta}
\tag{$EL^1_{P_{\nabla\Delta}}$}
\begin{split}
&
a_{\Delta}(T)\left(1-\rho\right)^{T-t_{k}-1}
\left[
c_{1}-p_{0}+\frac{By_{0}}{(y_{k}-y_{0})^{2}}
-2c_{2}\left(\rho\nabla y_{k}+\Delta(\nabla y_{k})
\right)
\right]\\
&
+ K_{\nabla}(T)(1+\rho)^{t_{k}-T}
\left[
\lambda - \frac{
\beta \left(\rho\sqrt{\Delta {y}_{k}+b}
-\Delta\sqrt{\Delta {y}_{k}+b}\right)}
{2\sqrt{\Delta {y}_{k}+b}\sqrt{\Delta {y}_{k+1}+b}}\right]\\
&
+ \Delta\left[
a_{\Delta}(T)(1-\rho)^{T-t_{k}}
\left[
c_{1}-p_{0}+\frac{By_{0}}{(y_{k-1}-y_{0})^{2}}
-2c_{2}\left(\rho\nabla y_{k}+(1-\rho)\nabla^2 y_{k}\right)
\right]
\right](t_{k+1}) = 0
\end{split}
\end{equation}
for $t_{k}\in\mathbb{T}^{\kappa}_{\kappa}$, and \eqref{eq 2 nabla Z} gives
\begin{equation}
\label{eq 2 nabla ex nabladelta}
\tag{$EL^2_{P_{\nabla\Delta}}$}
\begin{split}
&
a_{\Delta}(T)\left(1-\rho\right)^{T-t_{k}}
\left[
c_{1}-p_{0}+\frac{By_{0}}{(y_{k-1}-y_{0})^{2}}
-2c_{2}\left( \rho \nabla y_{k}+(1-\rho)\nabla^{2} y_{k}
\right)
\right]\\
&
+ K_{\nabla}(T)(1+\rho)^{t_{k-1}-T}
\left[
\lambda - \frac{
\beta \left(\rho\sqrt{\Delta {y}_{k}+b}
-(1+\rho)\nabla\sqrt{\Delta {y}_{k}+b}\right)}
{2\sqrt{\Delta {y}_{k}+b}\sqrt{\nabla{y}_{k}+b}}\right]\\
&
-\nabla\left[
K_{\nabla}(T)(1+\rho)^{t_{k}-T}
\left[
\lambda - \frac{\beta \left(
\rho\sqrt{\Delta {y}_{k}+b}-\Delta\sqrt{\Delta {y}_{k}+b}
\right)}
{2\sqrt{\Delta {y}_{k}+b}\sqrt{\Delta {y}_{k+1}+b}}\right]
\right](t_{k-1}) = 0
\end{split}
\end{equation}
for $t_{k}\in\mathbb{T}_{\kappa}^{\kappa}$.
Then the Euler--Lagrange equations \eqref{eq 1 delta ex}
and \eqref{eq 2 nabla ex} for $(P_{\Delta\nabla})$ are
\begin{equation*}
a_{\nabla}(T)\gamma_{1\Delta}+K_{\Delta}(T)\left(\frac{\partial f_{2\nabla}}{\partial  {y}_{k-1}}\circ \sigma
-\Delta\frac{\partial f_{2\nabla}}{\partial \nabla {y}_{k}}\right)+\Delta\left[K_{\Delta}(T)\gamma_{2\nabla}\right]\circ \sigma=0,
\quad t_{k}\in\mathbb{T}_{\kappa}^{\kappa},
\end{equation*}
and
\begin{equation*}
a_{\nabla}(T)\left(\frac{\partial f_{1\Delta}}{\partial  {y}_{k+1}}\circ \rho
-\nabla\frac{\partial f_{1\Delta}}{\partial \Delta {y}_{k}}\right)
+K_{\Delta}(T)\gamma_{2\nabla}-\nabla\left[a_{\nabla}(T)\gamma_{1\Delta}\right]\circ\rho=0,
\quad t_{k}\in\mathbb{T}_{\kappa}^{\kappa},
\end{equation*}
respectively, and the Euler--Lagrange equations \eqref{eq 1 delta ex nabladelta}
and \eqref{eq 2 nabla ex nabladelta} for $(P_{\nabla\Delta})$ are
\begin{equation*}
a_{\Delta}(T)\left(\frac{\partial f_{1\nabla}}{\partial{y}_{k-1}}\circ \sigma
-\Delta\frac{\partial f_{1\nabla}}{\partial \nabla {y}_{k}}\right)
+K_{\nabla}(T) \gamma_{2\Delta}
+\Delta\left[a_{\Delta}(T)\gamma_{1\nabla}\right]\circ \sigma=0,
\quad t_{k}\in\mathbb{T}_{\kappa}^{\kappa},
\end{equation*}
and
\begin{equation*}
a_{\Delta}(T)\gamma_{1\nabla}
+K_{\nabla}(T)\left(\frac{\partial f_{2\Delta}}{\partial{y}_{k+1}}\circ \rho
-\nabla\frac{\partial f_{2\Delta}}{\partial \Delta {y}_{k}}\right)
-\nabla\left[K_{\nabla}(T)\gamma_{2\Delta}\right]\circ\rho=0,
\quad t_{k}\in\mathbb{T}_{\kappa}^{\kappa},
\end{equation*}
respectively.

For the convenience of the reader, we recall the introduced notations:
\begin{itemize}
\item $P$ -- the continuous economic problem describing
a market policy of a firm, presented in Section~\ref{sec:ApplEconomics};

\item $EL_{P}$ -- the continuous Euler--Lagrange
equation \eqref{E-L continuous} associated to problem $P$ (see \eqref{eq:contELeq});

\item $P_{D}$ -- a discretization of problem $P$, in four possible forms:
$D\in\left\{ \Delta\Delta, \nabla\nabla, \Delta\nabla,\nabla\Delta \right\}$;

\item $(EL_{P})_{D}$ -- a discretization of the Euler--Lagrange equation $EL_{P}$,
in four different forms:
$D\in\left\{ \Delta\Delta, \nabla\nabla, \Delta\nabla,\nabla\Delta \right\}$;

\item $EL_{P_{D}}$ -- discrete Euler--Lagrange equations associated to problem $P_{D}$,
obtained from the calculus of variations on time scales (see Corollary~\ref{cor in Z}).
\end{itemize}


\section{Standard versus time-scale discretizations}
\label{sec:compare}

The discrepancy between direct discretization of the classical
optimality conditions and the time-scale approach to the calculus of variations
was discussed, from an embedding point of view, in \cite{MR2966865}.
Here we compare the results obtained
from direct and time-scale discretizations
for the more general problem \eqref{problem}--\eqref{punkty},
in concrete for the economic problem $(P)$
discussed in Section~\ref{sec:ApplEconomics}.
For illustrative purposes, the following values have been borrowed from \cite{CastilloPedregal}:
$$
\rho=0.05,\quad c_{0}=3,\quad c_{1}=0.5,\quad c_{2}=3,\quad T=3,
$$
$$
b=4,\quad \lambda= \frac{1}{2},\quad \beta=\frac{1}{4},\quad B=2,\quad y_{0}=2,
\quad y_T = 3.
$$
Moreover, we fixed the time scale to be $\mathbb{T}=\left\{0,1,2,3\right\}$.
In what follows we compare the candidates for solutions of the variational problems
$(P_{\Delta\nabla})$, $(P_{\nabla\Delta})$, $(P_{\Delta\Delta})$, and $(P_{\nabla\nabla})$,
obtained from the direct discretizations of the continuous Euler--Lagrange equation
(Section~\ref{sec:AP:1}) and the discrete time-scale
Euler--Lagrange equations (Section~\ref{sec:AP:2}).
All calculations were done using the Computer Algebra System \textsf{Maple}, version~10
(see Appendix~\ref{App:MapleCode}).
For problems $(P_{\Delta\Delta})$ and $(P_{\nabla\nabla})$
the discretization of the continuous Euler--Lagrange equation
and the discrete time-scale Euler--Lagrange equations coincide.
The Euler--Lagrange equation for problem $(P_{\Delta\Delta})$ is defined on
$\mathbb{T}^{\kappa^{2}}=\left\{0,1\right\}$ and we obtain a system
of two equations with two unknowns $y_{1}$ and $y_{2}$ that leads to
$y_{1}=2.322251304$ and $y_{2}=2.679109437$
with the cost functional value $K_{\Delta}(T)a_{\Delta}(T) = -16.97843026$.
Similarly, the Euler--Lagrange equation for problem $(P_{\nabla\nabla})$ is defined on
$\mathbb{T}_{\kappa^{2}}=\left\{2,3\right\}$ and we obtain a system
of two equations with two unknowns $y_{1}$ and $y_{2}$ that leads to
$y_{1}=1.495415602$ and $y_{2}=2.228040364$
with the cost functional value
$K_{\nabla}(T)a_{\nabla}(T) = -13.20842214$. As we show next,
for hybrid delta-nabla discrete problems of the calculus of variations,
the time-scale results seem superior.


\subsection{Problem $(P_{\Delta\nabla})$}

The Euler--Lagrange equations for problem $(P_{\Delta\nabla})$ are defined on
$\mathbb{T}^{\kappa}_{\kappa}=\left\{1,2\right\}$. Therefore, we obtain a system of equations
with two unknowns $y_{1}$ and $y_{2}$.
The discretized Euler--Lagrange equation \eqref{eq:EL:P:after7.17} gives
\begin{equation*}
y_{1}=2.910488556, \quad y_{2} = 2.970017180
\end{equation*}
with value of cost functional
$$
K_{\Delta}(T)a_{\nabla}(T) = -10.11399047.
$$
A better result is obtained using the discrete time-scale
Euler--Lagrange equation \eqref{eq 1 delta ex}:
$$
y_{1}=2.901851949,\quad y_{2} = 2.967442285
$$
with cost
$$
K_{\Delta}(T)a_{\nabla}(T) = -10.30544712.
$$


\subsection{Problem $(P_{\nabla\Delta})$}

The Euler--Lagrange equations for problem $(P_{\nabla\Delta})$ are also defined on
$\mathbb{T}^{\kappa}_{\kappa}=\left\{1,2\right\}$ and also lead
to a system of two equations with the two unknowns $y_{1}$ and $y_{2}$.
The discretized Euler--Lagrange equation \eqref{eq:EL:P:after7.18} gives
$$
y_{1}=2.183517532, \quad y_{2}=2.446990272
$$
with cost
$$
K_{\nabla}(T)a_{\Delta}(T) = -19.09167089.
$$
Our time-scale Euler--Lagrange equation
\eqref{eq 2 nabla ex nabladelta} gives better results:
$$
y_{1}=2.186742579,\quad y_{2}=2.457402400
$$
with cost
$$
K_{\nabla}(T)a_{\Delta}(T) = -19.17699675.
$$
The results are gathered in Table~\ref{Tbl:Ch7:tbl:1}.
\begin{table}[ht]
\begin{center}
\begin{tabular}{|c|c|c|c|}\hline
\multirow{2}{*}{$D$} &
\multicolumn{3}{c|}{The value of the functional of $(P_D)$, $\rho =0.05$, for candidates to minimizers obtained from:}\\
\cline{2-4}
&$(EL_{P})_{D}$  &$EL^{1}_{P_{D}}$  & $EL^{2}_{P_{D}}$ \\ \hline
$\Delta\nabla$ & $ -10.11399047$& $ -10.30544712 $ & $ -0.1537986252 \times 10^{-5}$   \\ \hline
$\nabla\Delta$  & $ -19.09167089 $ & $ 1020.105142 $ & $-19.17699675 $ \\ \hline
$\Delta\Delta$ & \multicolumn{3}{c|}{ -16.97843026 } \\ \hline
$\nabla\nabla$  & \multicolumn{3}{c|}{ -13.20842214}  \\ \hline
\end{tabular}
\end{center}
\caption{\label{Tbl:Ch7:tbl:1}The value of the functional associated to problem $P_{D}$,
$D \in \{\Delta\nabla, \nabla\Delta, \Delta\Delta, \nabla\nabla\}$, with $\rho =0.05$,
calculated using: (i) the direct discretization of the continuous Euler--Lagrange equation, that is, $(EL_{P})_{D}$;
(ii) discrete Euler--Lagrange equations $EL_{P_{D}}$,
obtained from the calculus of variations on time scales with $\mathbb{T} = \mathbb{Z}$.}
\end{table}


\section{Conclusion}
\label{sec:conc}

Some advantages of using the calculus of variations on time scales
in economics were already discussed in \cite{MR2218315,Atici:comparison,MyID:267}.
Here we considered two minimization discrete delta-nabla economic problems,
denoted by $(P_{\Delta\nabla})$ and $(P_{\nabla\Delta})$,
for which the time-scale approach leads to better results than the ones
obtained by a direct discretization of the continuous necessary optimality condition:
the approach on the right hand side of the diagram of Figure~\ref{diagram}
gives candidates to minimizers for which
the value of the functional is smaller than the values obtained from
the approach on the left hand side of the diagram of Figure~\ref{diagram}.
It might be concluded that the time-scale theory of the calculus of variations
leads to more precise results than the standard methods of discretization.
For comparison purposes, we have used the same values for the parameters
as the ones available in \cite{CastilloPedregal}. We have, however,
done simulations with other values of the parameters and the conclusion
persists: in almost all cases the results obtained from our time-scale approach are better;
hardly ever, they coincide with the classical method; never are worse.
In particular, we changed the value of the discount rate, $\rho$,
in the set $\{0.01, 0.02, 0.03, \ldots, 0.1\}$. This is motivated by the fact that this value
depends much on the economic and politic situation. The case where the time-scale advantage
is more visible is given in Table~\ref{Tbl:Ch7:tbl:2}, which corresponds to
a discount rate of 2\% ($\rho =0.02$). The interested reader can easily do his/her own
simulations using the \textsf{Maple} code found in Appendix~\ref{App:MapleCode}.
For future work, we would like to generalize our mixed delta-nabla results,
in particular Theorem~\ref{main}, for infinite horizon variational problems on time scales,
that so far have been only studied in the delta \cite{MR2747272} and nabla \cite{MyID:254} cases.
\begin{table}[ht]
\begin{center}
\begin{tabular}{|c|c|c|c|}\hline
\multirow{2}{*}{$D$} &
\multicolumn{3}{c|}{The value of the functional of $(P_D)$, $\rho =0.02$, for candidates to minimizers obtained from:}\\
\cline{2-4}
&$(EL_{P})_{D}$  &$EL^{1}_{P_{D}}$  & $EL^{2}_{P_{D}}$ \\ \hline
$\Delta\nabla$ & $ -10.62044023 $& $ -10.70908681 $ & $ 0.00001078869584 $   \\ \hline
$\nabla\Delta$  & $ -21.05128963 $ & $ 3.014255571 \times 10^{-8} $ & $ -264.5250742 $ \\ \hline
$\Delta\Delta$ & \multicolumn{3}{c|}{ -19.03571446 } \\ \hline
$\nabla\nabla$  & \multicolumn{3}{c|}{ -14.19294557}  \\ \hline
\end{tabular}
\end{center}
\caption{\label{Tbl:Ch7:tbl:2}The value of the functional associated to problem $P_{D}$,
$D \in \{\Delta\nabla, \nabla\Delta, \Delta\Delta, \nabla\nabla\}$, with $\rho =0.02$,
calculated using: (i) the direct discretization of the continuous Euler--Lagrange equation, that is, $(EL_{P})_{D}$;
(ii) discrete Euler--Lagrange equations $EL_{P_{D}}$,
obtained from the calculus of variations on time scales with $\mathbb{T} = \mathbb{Z}$.}
\end{table}


\appendix


\section{\textsf{Maple} Code}
\label{App:MapleCode}

We provide here all the definitions and computations done in \textsf{Maple} for
the problems considered in Section~\ref{sec:compare}. The definitions
follow closely the notations introduced along the paper, and should be clear
even for readers not familiar with the Computer Algebra System \textsf{Maple}.
\small
\begin{verbatim}
> restart:
> rho := 5/100:
> c0 := 3:
> lambda := 1/2:
> c1 := 1/2:
> c2 := 3:
> p0 := 1:
> y0 := 1:
> b := 4:
> beta := 1/4:
> B := 2:
> T := 3:
> y(0) := 2:
> y(T) := 3:
> TimeScale := [seq(i,i=0..T)];
\end{verbatim}
$$
TimeScale := [0, 1, 2, 3]
$$
\begin{verbatim}
> Sigma := t-> piecewise(t < T, t+1, t):
> Rho := t -> piecewise(t > 0, t-1, t):
> Delta := f -> f@Sigma-f:
> Nabla := f -> f-f@Rho:
> KDelta := sum((1+rho)^(t-T)*(c0+c1*(y@Sigma)(t)+c2*(Delta(y)(t))^2
     -(y@Sigma)(t)*p0-(B*(y@Sigma)(t))/((y@Sigma)(t)-y0)),t=0..T-1):
> KNabla := sum((1-rho)^(T-t)*(c0+c1*(y@Rho)(t)+c2*(Nabla(y)(t))^2
     -(y@Rho)(t)*p0-(B*(y@Rho)(t))/((y@Rho)(t)-y0)),t=1..T):
> aDelta := sum((1+rho)^(t-T)*(lambda*(y@Sigma)(t)
     +beta*sqrt(Delta(y)(t)+b)),t=0..T-1):
> aNabla := sum((1-rho)^(T-t)*(lambda*(y@Rho)(t)
     +beta*sqrt(Nabla(y)(t)+b)),t=1..T):
> Functional_PDN := subs({y(1)=y1,y(2)=y2},KDelta*aNabla):
> Functional_PND := subs({y(1)=y1,y(2)=y2},KNabla*aDelta):
> Functional_PDD := subs({y(1)=y1,y(2)=y2},KDelta*aDelta):
> Functional_PNN := subs({y(1)=y1,y(2)=y2},KNabla*aNabla):
> gamma1delta := t -> (1+rho)^(t-T)*(((c1-p0+(B*y0)/(((y@Sigma)(t)-y0)^2)))
     -2*c2*(rho*Delta(y)(t)+(1+rho)*Delta(Delta(y))(t))):
> gamma1nabla := t -> (1-rho)^(T-t)*((c1-p0+(B*y0)/(((y@Rho)(t)-y0)^2))
     -2*c2*(rho*Nabla(y)(t)+(1-rho)*Nabla(Nabla(y))(t))):
> gamma2delta := t -> (1+rho)^(t-T)*(lambda-(beta*(rho*sqrt(Delta(y)(t)+b)
     -(Delta(unapply(sqrt(Delta(y)(s)+b),s))(t))))/(2*sqrt(Delta(y)(t)+b)
     *sqrt((Delta(y)@Sigma)(t)+b))):
> gamma2nabla := t -> (1-rho)^(T-t)*(lambda
     -(beta*(rho*sqrt(Nabla(y)(t)+b)-Nabla(unapply(sqrt(Nabla(y)(s)+b),s))(t)))
     /(2*sqrt(Nabla(y)(t)+b)*sqrt((Nabla(y)@Rho)(t)+b))):
> # now we define the 4 problems that are considered in the paper
> # discretization of the continuous E-L equations
> # Problem Delta Nabla PDN
> # domain T_{kappa}^{kappa}
> PDN := t -> aNabla*gamma1delta(t)+KDelta*gamma2nabla(t):
> # Problem  Nabla Delta PND
> # domain T_{kappa}^{kappa}
> PND := t -> aDelta*gamma1nabla(t)+KNabla*gamma2delta(t):
> # Problem Delta Delta  PDD
> # domain T^{kappa^2}
> PDD := t -> aDelta*gamma1delta(t)+KDelta*gamma2delta(t):
> # Problem  Nabla Nabla PNN
> # domain T_{kappa^2}
> PNN := t -> aNabla*gamma1nabla(t)+KNabla*gamma2nabla(t):
> eqPDN := subs({y(1)=y1,y(2)=y2},{PDN(1)=0,PDN(2)=0}):
> SolutionPDN := fsolve(eqPDN,{y1,y2});
\end{verbatim}
$$
SolutionPDN := \{y1 = 2.910488556, y2 = 2.970017180\}
$$
\begin{verbatim}
> subs(SolutionPDN,Functional_PDN);
\end{verbatim}
$$
-10.11399047
$$
\begin{verbatim}
> eqPND := subs({y(1)=y1,y(2)=y2},{PND(1)=0,PND(2)=0}):
> SolutionPND := fsolve(eqPND,{y1,y2});
\end{verbatim}
$$
SolutionPND := \{y1 = 2.183517532, y2 = 2.446990272\}
$$
\begin{verbatim}
subs(SolutionPND,Functional_PND);
\end{verbatim}
$$
-19.09167089
$$
\begin{verbatim}
> eqPDD := subs({y(1)=y1,y(2)=y2},{PDD(0)=0,PDD(1)=0}):
> SolutionPDD := fsolve(eqPDD,{y1,y2});
\end{verbatim}
$$
SolutionPDD := \{y1 = 2.322251304, y2 = 2.679109437\}
$$
\begin{verbatim}
> subs(SolutionPDD,Functional_PDD);
\end{verbatim}
$$
-16.97843026
$$
\begin{verbatim}
> eqPNN := subs({y(1)=y1,y(2)=y2},{PNN(2)=0,PNN(3)=0}):
> SolutionPNN := fsolve(eqPNN,{y1,y2});
\end{verbatim}
$$
SolutionPNN := \{y1 = 1.495415602, y2 = 2.228040364\}
$$
\begin{verbatim}
> subs(SolutionPNN,Functional_PNN);
\end{verbatim}
$$
-13.20842214
$$
\begin{verbatim}
> # discretization of the time scale Euler-Lagrange equations
> # domain T_{kappa}^{kappa}
> part1 := t -> lambda*(1-rho)^(T-Sigma(t)):
> part2 := t ->(beta*(1-rho)^(T-Sigma(t))*((rho*sqrt(Nabla(y)(t)+b))
     -(1-rho)*(Delta(unapply(sqrt(Nabla(y)(s)+b),s))(t))))
     /(2*sqrt(Nabla(y)(t)+b)*sqrt(Delta(y)(t)+b)):
> part3 := t -> (1+rho)^(Rho(t)-T)*(c1-p0+(B*y0)/((y(t)-y0)^2)):
> part4 := t -> 2*c2*(1+rho)^(Rho(t)-T)
     *(rho*Delta(y)(t)+(y@Sigma)(t)-2*y(t)+(y@Rho)(t)):
> partDelta := Delta(unapply(KDelta*gamma2nabla(t),t))@Sigma:
> partNabla := Nabla(unapply(aNabla*gamma1delta(t),t))@Rho:
\end{verbatim}
\verb!> # E-L equation !\eqref{eq 1 delta Z}\verb! for Problem Delta Nabla!
\begin{verbatim}
> EL_delta := t -> aNabla*gamma1delta(t)+KDelta*(part1(t)-part2(t))+partDelta(t):
\end{verbatim}
\verb!> # E-L equation !\eqref{eq 2 nabla Z}\verb! for Problem Delta Nabla!
\begin{verbatim}
> EL_nabla := t -> aNabla*(part3(t)-part4(t))+KDelta*gamma2nabla(t)-partNabla(t):
> # systems of E-L equations for Problem Delta Nabla
> EL_delta_system := subs({y(1)=y1,y(2)=y2},{EL_delta(1)=0,EL_delta(2)=0}):
> Solution_EL_eqs_system_delta_version := fsolve(EL_delta_system,{y1,y2});
\end{verbatim}
$$
Solution_EL_eqs_system_delta_version := \{y1 = 2.901851949, y2 = 2.967442285\}
$$
\begin{verbatim}
> subs(Solution_EL_eqs_system_delta_version,Functional_PDN);
\end{verbatim}
$$
-10.30544712
$$
\begin{verbatim}
> EL_nabla_system := subs({y(1)=y1,y(2)=y2},{EL_nabla(1)=0,EL_nabla(2)=0}):
> Solution_EL_eqs_system_nabla_version := fsolve(EL_nabla_system,{y1,y2});
\end{verbatim}
$$
\{y1 = 0.5930298703, y2 = 1.090438395\}
$$
\begin{verbatim}
subs(Solution_EL_eqs_system_nabla_version,Functional_PDN);
\end{verbatim}
$$
-0.000001537986252
$$
\begin{verbatim}
> # E-L equations for Problem Nabla Delta
> part5 := t -> (1-rho)^(T-Sigma(t))*(c1-p0+(B*y0)/((y(t)-y0)^2)):
> part6 := t -> 2*c2*(1-rho)^(T-Sigma(t))*(rho*(Nabla(y)(t))+(Delta(Nabla(y))(t))):
> part7 := t -> lambda*(1+rho)^(Rho(t)-T):
> part8 := t -> (1+rho)^(Rho(t)-T)*((beta*(rho*sqrt(Delta(y)(t)+b)
     -(1+rho)*Nabla(unapply(sqrt(Delta(y)(s)+b),s))(t)))
     /(2*sqrt(Delta(y)(t)+b)*sqrt(Nabla(y)(t)+b))):
> partDelta2 := Delta(unapply(aDelta*gamma1nabla(t),t))@Sigma:
> partNabla2 := Nabla(unapply(KNabla*gamma2delta(t),t))@Rho:
\end{verbatim}
\verb!> # E-L equation !\eqref{eq 1 delta Z}\verb! for Problem Nabla Delta!
\begin{verbatim}
> EL_delta2 := t -> KNabla*gamma2delta(t)+aDelta*(part5(t)-part6(t))+partDelta2(t):
\end{verbatim}
\verb!> # E-L equation !\eqref{eq 2 nabla Z}\verb! for Problem Nabla Delta!
\begin{verbatim}
> EL_nabla2 := t -> KNabla*(part7(t)-part8(t))+aDelta*gamma1nabla(t)-partNabla2(t):
> # systems of E-L equations for Problem Nabla Delta
> EL_delta2_system := subs({y(1)=y1,y(2)=y2},{EL_delta2(1)=0,EL_delta2(2)=0}):
> Solution_EL_eqs_system_delta2_version := fsolve(EL_delta2_system,{y1,y2});
\end{verbatim}
$$
Solution_EL_eqs_system_delta2_version := \{y1 = 7.879260741, y2 = 4.775003718\}
$$
\begin{verbatim}
> subs(Solution_EL_eqs_system_delta2_version,Functional_PND);
\end{verbatim}
$$
1020.105142
$$
\begin{verbatim}
> EL_nabla2_system := subs({y(1)=y1,y(2)=y2},{EL_nabla2(1)=0,EL_nabla2(2)=0}):
> Solution_EL_eqs_system_nabla2_version := fsolve(EL_nabla2_system,{y1,y2});
\end{verbatim}
$$
Solution_EL_eqs_system_nabla2_version := \{y1 = 2.186742579, y2 = 2.457402400\}
$$
\begin{verbatim}
> subs(Solution_EL_eqs_system_nabla2_version,Functional_PND);
\end{verbatim}
$$
-19.17699675
$$

\normalsize


\section*{Acknowledgments}

This work was partially supported by Portuguese funds through the
\emph{Center for Research and Development in Mathematics and Applications} (CIDMA),
and \emph{The Portuguese Foundation for Science and Technology} (FCT),
within project PEst-OE/MAT/UI4106/2014.
Dryl was also supported by FCT through the Ph.D. fellowship
SFRH/BD/51163/2010; Torres by FCT within project OCHERA,
PTDC/EEI-AUT/1450/2012, co-financed by FEDER under POFC-QREN
with COMPETE reference FCOMP-01-0124-FEDER-028894.
The authors are very grateful to three anonymous referees,
for several constructive remarks and suggestions.



\end{document}